\newtheorem{thm}{Theorem}[section]
\newtheorem{theorem}[thm]{Theorem}
\newtheorem{lem}[thm]{Lemma}
\newtheorem{pro}[thm]{Proposition}
\newtheorem{cor}[thm]{Corollary}
\newtheorem{remark}[thm]{Remark}
\newcommand{\bigand}{\operatornamewithlimits{\hbox{\Large$\&$}}}
\newcommand{\refe}[1]{\stackrel{\eqref{#1}}{=}}
\newcommand{\dom}{\text{dom}}
\newcommand{\bea}{\begin{eqnarray*}}
\newcommand{\eea}{\end{eqnarray*}}
\newcommand{\ben}{\begin{enumerate}}
\newcommand{\een}{\end{enumerate}}
\newcommand{\bi}{\begin{itemize}}
\newcommand{\ei}{\end{itemize}}
\newcommand{\Rep}{\operatorname{Rep}}
\newcommand{\Choice}{\operatorname{Choice}}
\newcommand{\up}[1]{\textup{#1}}
\begin{document}

\title{Override and update}
\author{Marcel Jackson}
\address{Department of Mathematics and Statistics, La Trobe University, Victoria 3086,
Australia}
\email{m.g.jackson@latrobe.edu.au}
\author{Tim Stokes}
\address{Department of Mathematics and Statistics,
University of Waikato, Hamilton, New Zealand}
\email{tim.stokes@waikato.ac.nz}
\keywords{Override, update, partial function, hyperplane face monoid}
\subjclass[2010]{Primary: 08A70. Secondary: 20M30, 03B70, 68Q99.}

\thanks{The first author was partially supported by ARC Future Fellowship FT120100666}
\begin{abstract}
Override and update are natural constructions for combining partial functions, which arise in various program specification contexts.  We use an unexpected connection with combinatorial geometry to provide a complete finite system of equational axioms for the first order theory of the override and update constructions on partial functions, resolving the main unsolved problem in the area.
\end{abstract}

\maketitle

\section{Introduction}

The constructions of override and update on partial functions were analysed by Berendsen, Jansen, Schmaltz, Vaandrager~\cite{minusover}, with a view to giving a complete system of axioms.  Formally, let $\mathcal{P}(X,Y)$ be the set of all functions having domains in the non-empty set $X$ and mapping into non-empty $Y$; these are to be viewed as modelling programs with inputs from $X$ and outputs in $Y$, with partiality reflecting the fact that the programs may abort or not halt for certain input values from $X$.  The {\em override} operation is defined\footnote{We here use the notation ``$\sqcup$" rather than $\triangleright$ as in \cite{minusover}, as $\triangleright$ conflicts with some well-established notation in the algebraic theory of functions, and $\sqcup$ has some current usage as override, under the name  ``preferential union" \cite{modrest}.} 
in \cite{minusover} as follows: for all $f,g\in \mathcal{P}(X,Y)$, 
\[
(f \sqcup g)(x):=\begin{cases} f(x)&\mbox{ if }x\in \dom(f),\\
g(x)&\mbox{ if } x\in \dom(g)\backslash\dom(f),\\
\mbox{ undefined}&\mbox{ otherwise}.
\end{cases}
\]

The operation of {\em update} $f[g]$ is then defined in \cite{minusover} to be the restriction of $g\sqcup f$ to the domain of $f$.    These constructions are natural enough in their own right, but are also widely encountered in formal methods for program specification and correctness, such as in the the specification language  Z \cite{spi} and VDM-SL \cite{jon} amongst others; see \cite{minusover} for further examples.  The operations also have a natural interpretation as constructions on ``patterned'' Venn diagrams, where each partial function $f:X\to Y$ is thought of as a region of definition (a set, corresponding to $\dom(f)$) and a ``pattern'' or ``colour'' (the action: thinking of $Y$ as a set of colours, each point in $\dom(f)$ is assigned a colour by $f$, which collectively pattern the region $\dom(f)$).  The  $\sqcup$ operation returns the union of the regions, with the pattern of the first region dominating the pattern of the second, while the $\underline{\phantom{a}}[\underline{\phantom{a}}]$ operation returns the first region, but with the pattern of the second region dominating where it overlaps; see Figure \ref{fig:1}.  This interpretation can also be recast in terms of choice functions on powersets -- see Sections \ref{sec:basics} and \ref{sec:update} -- and in fact represents the free algebra for the class of representable algebras; see Theorem \ref{thm:venn}.
A further manifestation of $\sqcup$ arises in combinatorial geometry, but we postpone discussion of this until Section~\ref{sec:override}.

\begin{figure}
\begin{tikzpicture}
\draw [white] (-2,0) rectangle (-1,0);
\draw [ultra thick, pattern=north west lines] (0,0) circle [radius=1];
\draw [ultra thick, pattern=north east lines] (1.25,0) circle [radius=1];
\node at (0,-1.5) {$A$};
\node at (1.25,-1.5) {$B$};
\end{tikzpicture}\qquad 
\begin{tikzpicture}
\draw [ultra thick, pattern=north east lines] (1.25,0) circle [radius=1];
\draw [fill = white] (0,0) circle [radius=1];
\draw [ultra thick, pattern=north west lines] (0,0) circle [radius=1];
\node at (.625,-1.5) {$A\sqcup B$};
\end{tikzpicture}\qquad
\begin{tikzpicture}
% Definition of circles
%\def\firstcircle{(0,0) circle (1.5cm)}
%\def\secondcircle{(0:2cm) circle (1.5cm)}
\scope
\clip %(-1,-1) rectangle (1,1)
      (1.25,0)  circle  (1);
\fill [pattern=north east lines] (0,0) circle (1);
\endscope
\scope
\clip (-1,-1) rectangle (1,1)
      (1.25,0)  circle  (1);
\fill [pattern=north west lines] (0,0) circle (1);
\endscope
\draw [ultra thick] (0,0) circle (1);
\scope
\node at (0,-1.5) {$A[B]$};
\clip 
      (0,0)  circle  (1);
\draw [ultra thick]  (1.25,0)  circle  (1);
\endscope
\end{tikzpicture}
\caption{Patterned Venn diagrams and their combination via $\sqcup$ and $\rule{5pt}{.2pt}[\rule{5pt}{.2pt}]$.
}\label{fig:1}
\end{figure}
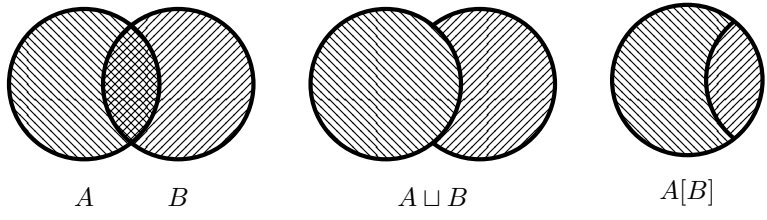

A complete system of axioms in the signature $\{\sqcup,\underline{\phantom{a}}[\underline{\phantom{a}}]\}$ is left as an open problem in~\cite{minusover}.  A possible system is presented:
\begin{align}
x&=x[x\sqcup y]\label{eq:absorb}\\
x\sqcup y &= y[x]\sqcup x\label{eq:absorb2}\\
x[y][z] &= x[ z\sqcup y]\label{eq:leftdist}\\
(x\sqcup y)[z] &= x[z]\sqcup y[z]\label{eq:rightdist}
\end{align}
along with associativity and idempotence of $\sqcup$.  We have in \cite[pp.~149]{minusover}:
\begin{quotation}
``Are these laws complete? If not, which laws should be added? Does there exist a finite equational axiomatization of override and update without auxiliary operators?''
\end{quotation}
The issue is subsequently revisited by Cvetko-Vah, Leech and Spinks \cite{CLS} where similar questions are reiterated; see \cite[pp.~262]{CLS} (the penultimate paragraph of Section 6).

In this article we will resolve these questions by showing 
\begin{itemize}
\item  there is a valid equational law that does not follow from the  speculated axiomatisations in both \cite{minusover} and \cite{CLS} and 
\item  with the addition of this extra law, a full completeness result can be obtained.
\end{itemize}  
In particular, the full first order theory of override and update on functions can be completely axiomatised by a finite set of equations.

After some preliminary development and investigation, we uncover the unexpected connection with combinatorial geometry and semigroup theory and use this to note an infinite axiomatisation for the signature of override alone (Corollary~\ref{cor:sqcupaxioms}).  No finite axiomatisation is possible for this signature.  We then show how to extend this to an infinite complete axiomatisation for the main signature of interest $\{\sqcup,\underline{\phantom{a}}[\underline{\phantom{a}}]\}$, but then provide an argument to show that these infinitely many laws are consequences of a finite set (Theorem \ref{thm:overrideupdate}).  During this proof we make frequent reference to the automated theorem prover {\em Prover9} and its companion counterexample program {\em Mace4} (see \cite{P9M4}), although only Remark \ref{rem:redundancies}, which observes redundancies in axiom systems, states facts that are not proved directly in the article.  There is also one counterexample obtained from {\em Mace4}, though this is in principle human-checkable.

Finally, in Section \ref{sec:update} we provide a description of the expressive power of update, from which various other similar results follow.  In terms of patterned Venn diagrams, the result shows that any conceivable patterning of the subregions of a single global region can  be achieved by applications of update alone.

\section{Preliminaries: other operations and algebras of functions}\label{prelim:otheroperations}

Throughout, we refer to elements of $\mathcal{P}(X,Y)$ as ``functions'' (rather than partial functions or maps), and to subalgebras of $\mathcal{P}(X,Y)$ (with respect to some signature of operations $\tau$) as $\tau$-algebras of functions.

The authors of \cite{minusover} embed the signature $\{\sqcup,\underline{\phantom{a}}[\underline{\phantom{a}}]\}$ into a more expressive signature $\{\sqcup,-\}$, where the ``minus'' operation is defined by 
\[
f-g=g\text{ where $f$ is undefined}.
\]
The operation of \emph{restrictive multiplication}:
\[
f\mathbin{@} g= f\text{ where $g$ is defined}
\]
 arises as a term function by way of $f\mathbin{@} g=f-(f-g)$, and then $\underline{\phantom{a}}[\underline{\phantom{a}}]$ arises by way of $f[g]=(g\sqcup f)\mathbin{@} f$.  In \cite{minusover} restrictive multiplication is refered to as intersection, but we prefer to reserve this name for actual intersection, which applies to functions as
\[
f\cap g= \{(x,y)\in X\times Y\mid (x,y)\in f\text{ and }(x,y)\in g\}
\]
and which goes back at least to the work of Garvac$'$ki\u{\i} \cite{gar}.
Restrictive multiplication itself also has interest in its own right, and has been considered as far back as the pioneering work of Vagner~\cite{vagner}.  The symbol $\triangleright$ is quite standard in abstract treatments of restrictive multiplication (see Schein \cite{sch70} for one of many examples), albeit in reverse form as $f\mathbin{@}g=g\triangleright f$.  As noted in the introduction, there is potential confusion here, as the $\triangleright$ symbol is used in \cite{minusover} to denote override, though in the present article we have adopted $\sqcup$.  We note that restrictive multiplication and minus also have natural interpretations in terms of patterned Venn diagrams; see Figure \ref{fig:2}.  (Amusingly, the Ti\textit{k}Z command \verb!\clip!  used to produce the diagram for restrictive multiplication, and other diagrams in this article, is itself an example of the operation of restrictive multiplication: the command restricts a patterned circle centred to the left to the domain of a circle to the right.)
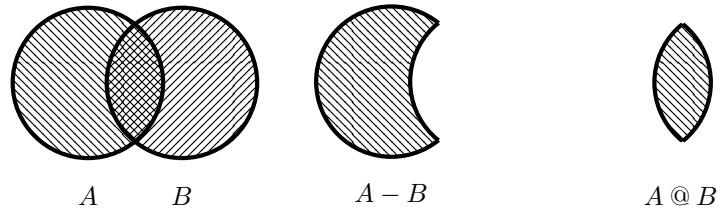
\begin{figure}[h]
\begin{tikzpicture}
\draw [white] (-2,0) rectangle (-1,0);
\draw [ultra thick, pattern=north west lines] (0,0) circle [radius=1];
\draw [ultra thick, pattern=north east lines] (1.25,0) circle [radius=1];
\node at (0,-1.5) {$A$};
\node at (1.25,-1.5) {$B$};
\end{tikzpicture}\qquad \begin{tikzpicture}
\scope
\clip (-1.05,-1.05) rectangle (1.05,1.05)
      (1.25,0)  circle  (1);
\fill [ultra thick, pattern=north west lines] (0,0) circle (1);
\draw  [ultra thick] (0,0) circle (1);
\endscope
\scope
\clip (0,0) circle (1);
\draw  [ultra thick] (1.25,0)  circle  (1);
\endscope
\node at (0,-1.5) {$A-B$};
\end{tikzpicture}\begin{tikzpicture}
\scope
\clip (1.25,0)  circle  (1);
\draw  [ultra thick, pattern=north west lines] (0,0)  circle  (1);
\endscope
\scope
\clip (0,0)  circle  (1);
\draw  [ultra thick] (1.25,0)  circle  (1);
\endscope
\node at (0.6,-1.5) {$A\mathbin{@} B$};
\end{tikzpicture}
\caption{The combination of patterned Venn diagrams via $-$ and $\mathbin{@}$}\label{fig:2}
\end{figure}

In \cite[\S6]{minusover} it is shown that the pair of operations $\{-,\sqcup\}$ is expressive enough to produce all possible patternings over any Boolean combination of domains; here Boolean complementation is interpreted within the union of the master regions, and each subregion has a pattern consistent with that of a master region containing it.   We revisit this in Section \ref{sec:update} of the present article, by showing that update alone is already capable of all possible patternings within a given domain.

A succinct set of equations is shown in \cite{minusover} to be complete for the equational theory of functions with $\{\sqcup,-\}$, therefore obtaining laws to reason with $\{\sqcup, \underline{\phantom{a}}[\underline{\phantom{a}}]\}$.  We refer to the notion of completeness for equations as \emph{weak completeness}, reserving \emph{completeness} for the situation where all first order properties are consequences (rather than only equational properties).  
 In general the setting of weak completeness still allows for a high level of incompleteness on nonequational properties: witness  for example, Kozen's axiom system KA which is complete for the equational theory of regular languages \cite{koz}, versus the strong incompleteness result given in \cite{koz2}.  In the particular case of  $\{\sqcup,-\}$, however, it was subsequently shown by Cvetko-Vah, Leech and Spinks  \cite{CLS}  that full completeness holds anyway, as the system turns out to be term-equivalent to certain types of skew Boolean algebras previously considered by Leech in \cite{LeechSBA}, where a complete axiomatisation of the algebras was given, and shown to be a variety.  In fact both the signatures $\{\sqcup, \underline{\phantom{a}}[\underline{\phantom{a}}]\}$ and $\{\sqcup,-\}$ are already expressible in the richer theory described in Cirulis \cite{cirulis} or indeed in the authors' work \cite{modrest}, where completeness results are also obtained.

In subsequent work we will consider many of the remaining combinations of $\{\sqcup,\underline{\phantom{a}}[\underline{\phantom{a}}], \mathbin{@},-\}$, as well as operations of functional intersection $\cap$, set difference and even composition.  In these cases we are able to find a unified approach that can be adjusted to obtain complete axiomatisations in each case, provided that restrictive multiplication is expressible.  The original signature of interest $\{\sqcup,\underline{\phantom{a}}[\underline{\phantom{a}}]\}$, which is the subject of the present article, distinguishes itself from these by seemingly requiring a different approach.

\section{Preliminaries: representation basics and finite generation}\label{sec:basics}

In this preliminary section we observe some elementary facts concerning \emph{representable algebras}: algebras that are faithfully representable as algebras of functions.  Throughout, when $\tau$ is a signature of operations on functions, by a \emph{$\tau$-algebra of functions} we mean an algebra whose elements are functions, and whose fundamental operations are the operations in $\tau$.  We let $\Rep(\tau)$ denote the class of algebras isomorphic to $\tau$-algebras of functions.

The following easy observation (whose proof is omitted) details some of the flexibility available in choosing the set $Y$ when representing in an algebra of functions $\mathcal{P}(X,Y)$.

\begin{lem}\label{lem:disjoint}
Let $X, Y$ be nonempty sets and $Z$ denote the disjoint union $X\times\{1\}\cup Y\times \{2\}$.    For any subset $\tau\subseteq\{\sqcup,\underline{\phantom{a}}[\underline{\phantom{a}}],\mathbin{@},-\}$, the algebra $\langle \mathcal{P}(X,Y),\tau\rangle$ is isomorphic to $\langle \mathcal{P}(X\times\{1\},Y\times \{2\}),\tau\rangle$ which is a subalgebra of $\langle \mathcal{P}(Z,Z),\tau\rangle$.  
\end{lem}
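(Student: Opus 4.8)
The plan is to produce an explicit relabelling bijection witnessing the isomorphism and then to observe that $\mathcal P(X\times\{1\},Y\times\{2\})$ already sits inside $\mathcal P(Z,Z)$ as a subset closed under the operations. First I would fix the coordinate bijections $\alpha\colon X\to X\times\{1\}$, $x\mapsto(x,1)$, and $\beta\colon Y\to Y\times\{2\}$, $y\mapsto(y,2)$, and define $\Phi\colon\mathcal P(X,Y)\to\mathcal P(X\times\{1\},Y\times\{2\})$ by $\Phi(f)=\beta\circ f\circ\alpha^{-1}$; concretely, $\dom(\Phi(f))=\alpha(\dom(f))$ and $\Phi(f)(x,1)=(f(x),2)$. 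Since $\alpha$ and $\beta$ are bijections, $\Phi$ is a bijection, with inverse relabelling back along $\alpha^{-1}$ and $\beta^{-1}$.

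The key observation making the homomorphism check routine is that each operation in $\{\sqcup,\underline{\phantom{a}}[\underline{\phantom{a}}],\mathbin{@},-\}$ is defined by a rule referring only to (i) whether a point lies in the domain of an argument and (ii) the value assigned there, never to the identity of the underlying sets $X$ or $Y$. Consequently, applying $\Phi$ (which merely renames domain points via $\alpha$ and values via $\beta$, uniformly and bijectively) commutes with every operation. One verifies this for each operation by comparing domains and values on both sides; for instance $\Phi(f\sqcup g)$ and $\Phi(f)\sqcup\Phi(g)$ both have domain $\alpha(\dom(f)\cup\dom(g))$ and agree pointwise, and the checks for $\underline{\phantom{a}}[\underline{\phantom{a}}]$, $\mathbin{@}$ and $-$ are identical in spirit. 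Thus $\Phi$ is an isomorphism of $\tau$-algebras.

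For the subalgebra claim, every $h\in\mathcal P(X\times\{1\},Y\times\{2\})$ satisfies $\dom(h)\subseteq X\times\{1\}\subseteq Z$ and has range in $Y\times\{2\}\subseteq Z$, so $h\in\mathcal P(Z,Z)$, giving the set inclusion $\mathcal P(X\times\{1\},Y\times\{2\})\subseteq\mathcal P(Z,Z)$. Closure follows once one records that none of the four operations enlarges the union of the domains of its arguments nor introduces values outside those of its arguments: $\dom(f\sqcup g)=\dom(f)\cup\dom(g)$, while $\dom(f[g]),\dom(f\mathbin{@}g)\subseteq\dom(f)$ and $\dom(f-g)\subseteq\dom(g)$, and in every case the output values are drawn from $f$ or $g$. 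Hence domains stay inside $X\times\{1\}$ and values inside $Y\times\{2\}$. Because the operations are given by these same set-free rules on any $\mathcal P(\cdot,\cdot)$, the operations of $\mathcal P(X\times\{1\},Y\times\{2\})$ as a standalone $\tau$-algebra coincide with the restrictions of the corresponding operations on $\mathcal P(Z,Z)$, so the inclusion is genuinely a $\tau$-subalgebra.

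There is no substantive obstacle here; the only point requiring any care is to make the ``the operations ignore the ambient sets'' observation precise enough that both the homomorphism property and closure reduce to one-line comparisons of domains and values, rather than four separate case analyses. This is presumably why the authors omit the proof, and it is the single step I would make sure to state cleanly before invoking it for each operation.
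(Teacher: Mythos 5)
Your proof is correct and is exactly the routine relabelling-plus-closure argument the authors have in mind when they omit the proof as an ``easy observation'': $\Phi(f)=\beta\circ f\circ\alpha^{-1}$ gives the isomorphism because every operation in $\{\sqcup,\underline{\phantom{a}}[\underline{\phantom{a}}],\mathbin{@},-\}$ is defined pointwise in terms of domains and values only, and the closure computation (domains contained in unions of domains, values drawn from the arguments) gives the subalgebra claim. Nothing is missing.
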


The following lemma is also trivial.

\begin{lem}\label{lem:functionquotient}
Let $\tau\subseteq\{\sqcup,\underline{\phantom{a}}[\underline{\phantom{a}}],\mathbin{@},-\}$.  Let $X,Y$ be sets, let $\theta$ be an equivalence relation on $Y$ and let $X'$ be a subset of $X$.  
\begin{enumerate}
\item Any function $f$ in $\mathcal{P}(X,Y)$ determines a function $f/\theta$ in $\mathcal{P}(X,Y/\theta)$ via $(f/\theta):x\mapsto f(x)/\theta$ and \up(as a $\tau$-algebra of functions\up) $\mathcal{P}(X,Y/\theta)$ is a quotient of $\mathcal{P}(X,Y)$.  
\item Any function $f$ in $\mathcal{P}(X,Y)$ determines a function $f'$ in $\mathcal{P}(X',Y)$ by restricting its domain to $X'$ and \up(as a $\tau$-algebra of functions\up) $\mathcal{P}(X',Y)$ is a quotient of $\mathcal{P}(X,Y)$.  
\end{enumerate}
\end{lem}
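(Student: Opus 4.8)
The plan is to exhibit, in each part, a surjective $\tau$-algebra homomorphism from $\mathcal{P}(X,Y)$ onto the claimed quotient. For part (1) this is the map $\Phi\colon f\mapsto f/\theta$, and for part (2) the restriction map $\Psi\colon f\mapsto f'=f|_{X'}$. In each case I would check three things: that the assignment yields a genuine element of the target algebra, that it is surjective, and that it respects each operation in $\tau$. Well-definedness is immediate, since $f/\theta$ has the same domain as $f$ and sends $x$ to the class $f(x)/\theta$, while $f|_{X'}$ is the function with domain $\dom(f)\cap X'$ agreeing with $f$ there.

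The single observation doing all the work is that every operation in $\{\sqcup,\underline{\phantom{a}}[\underline{\phantom{a}}],\mathbin{@},-\}$ is \emph{local and value-copying}: for such an operation $*$, the domain $\dom(f*g)$ is a fixed Boolean combination of $\dom(f)$ and $\dom(g)$, independent of the actual values, and for each $x\in\dom(f*g)$ the value $(f*g)(x)$ equals either $f(x)$ or $g(x)$, according only to which input domains contain $x$. No operation transforms the elements of $Y$ or combines values at distinct points. For part (1) this yields the homomorphism property at once: applying $\theta$ to the output value at a point coincides with first applying $\theta$ to the inputs, because the output is literally an input value, and the domain is unaffected by passing to $Y/\theta$; hence $(f*g)/\theta=(f/\theta)*(g/\theta)$. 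Surjectivity follows by choosing, for any $g\in\mathcal{P}(X,Y/\theta)$ and each $x\in\dom(g)$, a representative of the class $g(x)$, thereby defining a preimage.

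For part (2) the same locality principle applies, but the verification reduces to checking that intersecting with $X'$ commutes with the Boolean combinations describing the domains. This is routine for union (as in $\sqcup$) and for plain intersection or domain-restriction (as in $\mathbin{@}$ and update), and the only point meriting a moment's care is $-$, whose domain involves a relative complement: one checks that $\big(\dom(g)\cap(X\setminus\dom(f))\big)\cap X'=\dom(g|_{X'})\setminus\dom(f|_{X'})$, which holds because $X'\setminus(\dom(f)\cap X')=X'\cap(X\setminus\dom(f))$. Surjectivity here is even easier, since any function whose domain is contained in $X'$ already lies in $\mathcal{P}(X,Y)$ and is fixed by $\Psi$. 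I do not anticipate any genuine obstacle: the whole content is the locality observation, after which both arguments are bookkeeping.
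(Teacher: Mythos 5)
Your proof is correct; the paper simply declares this lemma trivial and omits any argument, and your ``local and value-copying'' observation is precisely the intended justification. Nothing further is needed.
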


The following algebra ${\bf 3}$ and its reducts will play a prominent role.
\[
\begin{tabular}{c|ccc}
$\cdot$&0&1&$2$\\
\hline
0&0&1&$2$\\
1&1&1&1\\
$2$&$2$&$2$&$2$
\end{tabular}\qquad
\begin{tabular}{c|ccc}
$-$&0&1&$2$\\
\hline
0&0&0&$0$\\
1&1&0&0\\
$2$&$2$&$0$&$0$
\end{tabular}
\]
The algebra is isomorphic to $\langle\mathcal{P}(\{x\},\{+,-\});\sqcup,-\rangle$, where $0$ is the empty function, $1$ maps $x$ to $+$ and $2$ maps $x$ to $-$.  It is shown in \cite{CLS} that the variety generated by~${\bf 3}$ coincides with the quasivariety generated by ${\bf 3}$, which in turn is precisely the class of representable $\{\sqcup,-\}$-algebras.  We now give an alternative proof of this second fact, extending it to all reduct signatures as well.

Let $\tau\subseteq\{\sqcup,\mathbin{@},-,\underline{\phantom{a}}[\underline{\phantom{a}}]\}$, and let ${\bf 3}_\tau$ be the reduct of ${\bf 3}$ to~$\tau$.  In the present article we are interested primarily in the signature $\{\sqcup\}$ and $\{\sqcup,\underline{\phantom{a}}[\underline{\phantom{a}}]\}$, but the other cases are used in subsequent work and it is easier to prove one general result than to prove two specific ones.  The arguments also work for term reducts of $\{\sqcup,\mathbin{@},-,\underline{\phantom{a}}[\underline{\phantom{a}}]\}$ as well as reducts, but we omit this more general statement for notational brevity.  There is also an extension to signatures including $\cap$, however we do not need this here and it requires replacement of ${\bf 3}$ by an infinite family of structures.

\begin{thm}\label{thm:3tau}
For any set of operations $\tau\subseteq\{\sqcup,\mathbin{@},-,\underline{\phantom{a}}[\underline{\phantom{a}}]\}$ and any sets $X,Y$, the algebra ${\bf S}=\langle\mathcal{P}(X,Y),\tau\rangle$ is isomorphic to a subalgebra of a direct power of ${\bf 3}_\tau$, and hence $\Rep(\tau)$ coincides exactly with the class $\mathsf{SP}({\bf 3}_{\tau})$ of isomorphic copies of subalgebras of direct powers of ${\bf 3}_\tau$. 
\end{thm}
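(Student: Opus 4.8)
The plan is to build an explicit subdirect embedding of ${\bf S}=\langle\mathcal{P}(X,Y),\tau\rangle$ into a power of ${\bf 3}_\tau$, obtaining the coordinate maps for free from Lemma~\ref{lem:functionquotient} rather than checking the homomorphism property operation-by-operation. For each pair $(x,y)\in X\times Y$, let $\theta_y$ be the equivalence on $Y$ whose two classes are $\{y\}$ and $Y\setminus\{y\}$. I would compose the domain-restriction quotient $\mathcal{P}(X,Y)\to\mathcal{P}(\{x\},Y)$ from Lemma~\ref{lem:functionquotient}(2) (with $X'=\{x\}$) with the value-quotient $\mathcal{P}(\{x\},Y)\to\mathcal{P}(\{x\},Y/\theta_y)$ from Lemma~\ref{lem:functionquotient}(1), giving a $\tau$-homomorphism $\pi_{x,y}\colon{\bf S}\to\mathcal{P}(\{x\},Y/\theta_y)$. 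Since $Y/\theta_y$ has at most two elements, the target is isomorphic to ${\bf 3}$ when $|Y|\ge 2$ and to its two-element subalgebra $\{0,1\}$ when $|Y|=1$; either way it embeds into ${\bf 3}_\tau$. The virtue of routing through the lemmas is that each $\pi_{x,y}$ is automatically a homomorphism for whatever operations lie in $\tau$, so no case analysis over the operations is needed.

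Next I would bundle these (composed with the embeddings into ${\bf 3}_\tau$) into a single map $\Phi\colon{\bf S}\to{\bf 3}_\tau^{X\times Y}$, $f\mapsto(\pi_{x,y}(f))_{(x,y)}$, which is a $\tau$-homomorphism into the power; the one remaining thing to verify is that $\Phi$ is injective. Here one observes that $\pi_{x,y}(f)$ records precisely the trichotomy ``$f$ undefined at $x$'' / ``$f(x)=y$'' / ``$f(x)$ defined but ${}\ne y$''. Thus if $f\ne g$, choosing a point $x$ where they disagree, either exactly one of $f(x),g(x)$ is defined, in which case every coordinate $(x,y)$ separates $f$ from $g$, or both are defined and unequal, in which case the coordinate $(x,f(x))$ separates them. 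Hence $\Phi$ is an embedding and ${\bf S}\in\mathsf{SP}({\bf 3}_\tau)$.

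It then remains to upgrade this to the claimed equality $\Rep(\tau)=\mathsf{SP}({\bf 3}_\tau)$. The inclusion $\Rep(\tau)\subseteq\mathsf{SP}({\bf 3}_\tau)$ is immediate, since any representable algebra is a subalgebra of some $\mathcal{P}(X,Y)$, which sits inside a power of ${\bf 3}_\tau$ by the above, and $\mathsf{SP}({\bf 3}_\tau)$ is closed under $\mathsf{S}$. For the reverse inclusion I would note that ${\bf 3}_\tau$ is itself representable and that $\Rep(\tau)$ is closed under $\mathsf{S}$ (trivially) and under $\mathsf{P}$; the latter follows by placing a product $\prod_i\langle A_i,\tau\rangle$ with $A_i\subseteq\mathcal{P}(X_i,Y_i)$ inside $\mathcal{P}(\bigsqcup_i X_i,\bigsqcup_i Y_i)$ as disjoint unions of graphs, exactly the disjointification already used in Lemma~\ref{lem:disjoint}, the operations then acting independently on each disjoint block.

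I expect no serious obstacle here: the whole difficulty is concentrated in the choice of coordinate maps, and in the insight that tracking the above trichotomy against a test value $y$ is simultaneously enough to be a homomorphism (supplied by the quotient lemmas) and enough to distinguish any two functions. Everything after that---the separation argument, and the closure of $\Rep(\tau)$ under $\mathsf{S}$ and $\mathsf{P}$---is routine.
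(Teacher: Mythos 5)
Your proposal is correct and follows essentially the same route as the paper: both proofs obtain the separating homomorphisms onto (subalgebras of) ${\bf 3}_\tau$ by composing the two quotient constructions of Lemma~\ref{lem:functionquotient} (restrict the domain to a single point $x$, then collapse the codomain to ``equals a test value $y$'' versus ``everything else''), and both reduce the reverse inclusion to the representability of ${\bf 3}_\tau$ together with closure of $\Rep(\tau)$ under $\mathsf{S}$ and $\mathsf{P}$. The only difference is cosmetic: you index all coordinates by pairs $(x,y)$ up front, whereas the paper builds the single congruence $\theta$ (which is exactly your $\pi_{x,f(x)}$) after fixing the pair $f\neq g$ to be separated.
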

\begin{proof}
First observe that as ${\bf 3}_\tau$ is representable, so too are all members of $\mathsf{SP}({\bf 3}_{\tau})$ (see \cite[pp.~1062]{modrest} for an example of the simple argument).  Thus it remains to show that all representable algebras  lie in $\mathsf{SP}({\bf 3}_{\tau})$.   
 For this, it suffices to show that for $f\neq g$ in $\mathcal{P}(X,Y)$ there is a $\tau$-homomorphism $\phi:{\bf S}\to{\bf 3}_\tau$ (preserving $\sqcup$ as~$\cdot$) such that $\phi(f)\neq\phi(g)$.  Indeed, in this case we have that $\langle\mathcal{P}(X,Y),\tau\rangle$ embeds into $({\bf 3}_\tau)^{\operatorname{hom}({\bf S},{\bf 3}_\tau)}$ by the map sending $h\in \mathcal{P}(X,Y)$ to the tuple $\varepsilon_h(\phi)=\phi(h)$ (for each $\phi\in \operatorname{hom}({\bf S},{\bf 3}_\tau)$).

So let $f\neq g$, and consider some point $x\in X$ where $f$ and $g$ disagree.  Without loss of generality, assume that $f$ is defined at $x$ and either $g$ is not defined at $x$, or the value of $g$ is distinct from that of $f$ at $x$.  Define an equivalence $\theta$ by
$f_1\mathrel{\theta}f_2$ if 
\begin{align*}
&f_1,f_2\text{ are defined at $x$ but take different values to $f(x)$,}\\
&f_1,f_2\text{ are undefined at $x$,}\\
&f_1(x)=f_2(x)=f(x)\text{ otherwise.}
\end{align*}
This equivalence is a congruence in the signature $\{\sqcup,\mathbin{@},-,\underline{\phantom{a}}[\underline{\phantom{a}}]\}$, and hence also in the signature $\tau$: indeed it is a combination of the two items in Lemma \ref{lem:functionquotient} (restrict to domain $\{x\}$, then identify all points in the range not equal to $f(x)$).   The quotient ${\bf S}/\theta$ is isomorphic to a subsemigroup of ${\bf 3}_\tau$ (which is an isomorphism unless one of the three cases in the definition of $\theta$ does not occur).  Also, $f$ is not $\theta$-related to $g$, so that the required separation property holds.
\end{proof}

A well-known theorem of Birkhoff (see \cite[Corollary II.11.10]{bursan} for example) shows that the free algebras for an $\mathsf{SP}$-closed class are contained within the class, and hence the free algebras for $\Rep(\tau)=\mathsf{SP}({\bf 3}_{\tau})$ are isomorphic to algebras of functions.  As we now show, the free agebras have a natural representation as choice functions on families of subsets of a set.  Recall that a \emph{choice function} for a family $\{S_i\mid i\in I\}$ of sets is a map $\gamma:\{S_i\mid i\in I\}\to \bigcup_{i\in I}S_i$ with the property $\gamma(S_i)\in S_i$ for each $i\in I$.  For any nonempty set $S$, the set of all choice functions with domains equal to nonempty subsets of the powerset $\wp(S)$ is easily seen to form a $\tau$-subalgebra of $\mathcal{P}(\wp(S),S)$, which we will denote by $\Choice(S)$.  The next theorem shows that the $S$-generated free algebra for $\operatorname{Rep}(\tau)$ can be represented in $\Choice(S)\leq \mathcal{P}(\wp(S),S)$, with the constant choice functions as free generators.  
\begin{thm}\label{thm:venn}
Fix a signature $\tau\subseteq\{\sqcup,\mathbin{@},-,\underline{\phantom{a}}[\underline{\phantom{a}}]\}$ and let  $F_\tau(S)$ denote the free algebra in $\mathsf{SP}({\bf 3}_\tau)$ with free generators $S$.    Then $F_\tau(S)$ embeds into $\Choice(S)$ via the map sending each $s\in S$ to the choice function with domain $\{A\subseteq S\mid s\in A\}$ and definition $A\mapsto s$.
\end{thm}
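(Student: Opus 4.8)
The plan is to use the universal property of the free algebra to produce the map, and then to establish injectivity by showing that every homomorphism from $F_\tau(S)$ into ${\bf 3}_\tau$ factors through it. First I would check existence of the homomorphism. For each $s\in S$ write $\gamma_s$ for the prescribed choice function, with domain $\{A\subseteq S\mid s\in A\}$ and constant value $s$; since $s\in A$ for every $A$ in this domain, each $\gamma_s$ is genuinely a choice function and so lies in $\Choice(S)$. By Theorem~\ref{thm:3tau}, $\Choice(S)\le\mathcal{P}(\wp(S),S)$ belongs to $\mathsf{SP}({\bf 3}_\tau)$, so by freeness of $F_\tau(S)$ in this class the assignment $s\mapsto\gamma_s$ extends uniquely to a homomorphism $\Phi\colon F_\tau(S)\to\Choice(S)$.

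For injectivity, let $u\neq v$ in $F_\tau(S)$. Since $F_\tau(S)$ is itself a member of $\mathsf{SP}({\bf 3}_\tau)$, Theorem~\ref{thm:3tau} supplies a homomorphism $h\colon F_\tau(S)\to{\bf 3}_\tau$ with $h(u)\neq h(v)$, and such an $h$ is determined by the set map $\alpha:=h|_S\colon S\to\{0,1,2\}$. It therefore suffices to produce, for each such $\alpha$, a homomorphism $\rho\colon\Choice(S)\to{\bf 3}_\tau$ with $\rho\circ\Phi=h$: for then $\rho(\Phi(u))=h(u)\neq h(v)=\rho(\Phi(v))$, forcing $\Phi(u)\neq\Phi(v)$, and injectivity follows.

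The construction of $\rho$ is the crux, and the point at which a naive approach breaks down. The obvious candidate — evaluate a choice function at a single fixed subset $A\subseteq S$ and record whether the result is undefined, equal to a designated value, or different — cannot work, because it sends at most one generator to $1$, whereas $\alpha$ may take the value $1$ on many generators at once. The remedy is to combine both quotients of Lemma~\ref{lem:functionquotient}. Put $A:=\alpha^{-1}(\{1,2\})$, regarded as a point of $\wp(S)$. First restrict the domain $\wp(S)$ to the single point $\{A\}$ via Lemma~\ref{lem:functionquotient}(2); this carries $\gamma_s$ to the partial map on $\{A\}$ that is defined, with value $s$, exactly when $s\in A$. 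Then collapse the range by the two-block equivalence on $S$ with blocks $\alpha^{-1}(1)$ and $S\setminus\alpha^{-1}(1)$, using Lemma~\ref{lem:functionquotient}(1); the resulting algebra of partial maps from a one-point domain into a set of at most two values is isomorphic to a subalgebra of ${\bf 3}_\tau$, and I choose the labelling so that the block $\alpha^{-1}(1)$ corresponds to the value $1$. Composing these two quotients yields $\rho$, and inspection on generators gives $\rho(\gamma_s)=0,1,2$ according as $\alpha(s)=0,1,2$; hence $\rho\circ\Phi$ and $h$ agree on $S$ and so coincide. The degenerate situations, in which $A=\emptyset$ or one of the two blocks is empty, simply place the image of $\rho$ in a proper subalgebra of ${\bf 3}_\tau$ and present no difficulty.
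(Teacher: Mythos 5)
Your proof is correct, but it takes a genuinely different route from the paper's. You obtain the homomorphism $\Phi\colon F_\tau(S)\to\Choice(S)$ for free from the universal property (after noting $\Choice(S)\in\mathsf{SP}({\bf 3}_\tau)$), and then prove injectivity by factoring: every separating homomorphism $h\colon F_\tau(S)\to{\bf 3}_\tau$ supplied by Theorem~\ref{thm:3tau} is realised on $\Choice(S)$ by restricting domains to the single point $A=\alpha^{-1}(\{1,2\})$ and then two-colouring the codomain by the blocks $\alpha^{-1}(1)$ and its complement --- essentially the congruence $\theta$ from the proof of Theorem~\ref{thm:3tau} transplanted to $\Choice(S)$; your diagnosis of why a single-evaluation map fails, and your handling of the degenerate cases, are both right. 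The paper argues in the opposite direction: it starts from an arbitrary faithful representation $F_\tau(S)\leq\mathcal{P}(A,B)$ and normalises it through a chain of applications of Lemma~\ref{lem:functionquotient} and the universal mapping property --- recolouring each generator's outputs by the generator's name, projecting the codomain onto $S$ (injectivity here rests on an induction showing that every element of the free algebra agrees pointwise with some generator), collapsing domain points that lie in exactly the same generators' domains, identifying what remains with a subset of $\wp(S)$, and finally extending to all of $\wp(S)$. Your argument is shorter and entirely sidesteps the pointwise-agreement induction; the paper's buys an explicit normal form for an arbitrary representation, and the pointwise-agreement fact it establishes along the way feeds into the later determination (Theorem~\ref{thm:VennComplete} and the surrounding discussion) of exactly which choice functions lie in $F_\tau(S)$.
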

The proof is via a series of applications of Lemma \ref{lem:functionquotient}, but is postponed until Section \ref{sec:update} to maintain the flow of these preliminary sections.
Theorem \ref{thm:venn} shows that Venn diagram interpretations of $\tau$ terms are simply a diagrammatic visualisation of the free algebras (where each individual pattern denotes the generator chosen for that subregion).  However, Theorem~\ref{thm:venn} itself does not describe exactly which choice functions appear in $F_\tau(S)$, only what the free generators are.  When $\tau$ has the full strength of $\{\sqcup,\mathbin{@},-,\underline{\phantom{a}}[\underline{\phantom{a}}]\}$, it is relatively straightforward to see that $F_\tau(S)$ coincides with $\Choice(S)$, and a result equivalent to this is given in Berendsen et al.~\cite[Proposition~14]{minusover}.  In contrast, the operation of $\underline{\phantom{a}}[\underline{\phantom{a}}]$ preserves domains, so for this signature, all choice functions in $F_{\{\underline{\phantom{a}}[\underline{\phantom{a}}]\}}(S)$ (as a subalgebra of  $\Choice(S)$) must have domains identical to the domain of a free generator.  We return to this in Section \ref{sec:update} where we show that $F_\tau(S)$ consists of all such choice functions; similar classifications for remaining signatures in $\{\sqcup,\mathbin{@},-,\underline{\phantom{a}}[\underline{\phantom{a}}]\}$ then follow.

We did not describe relational semantics for the operations in $\{\sqcup,\mathbin{@},-,\underline{\phantom{a}}[\underline{\phantom{a}}]\}$, however the given functional definitions extend to the case of relations (replacing statements such as $f(x)=y$ by $(x,y)\in f$).
\begin{remark}\label{rem:binrel}
The proof of Theorem \ref{thm:3tau} continues to hold if binary relations are considered instead of functions.  Thus there is no algebraic distinction between the relational case and the functional case for reducts of $\{\sqcup,\mathbin{@},-,\underline{\phantom{a}}[\underline{\phantom{a}}]\}$.
\end{remark}

\subsection{Abstract definability}

We now expand on some of the basic term function relationships between the operations as in the introduction.  Each of the operations has been given a natural definition in the language of algebras of functions, but we now observe that sometimes this definition can be made solely in terms of the abstract algebraic properties.  Rather than introduce all of the necessary terminology to make this precise, observe that each of the operations has been defined by formul{\ae} in the multi-sorted language of concrete function.  But often we will discover that some of the operations may be defined in terms of the others, without recourse to the full language of function.  All of the following concepts can be generalised from function to other forms of binary (and higher arity) relations.  

Suppose that $\{\divideontimes_i\mid i\in I\}$ is a signature of operation and relation symbols, but with each $\divideontimes_i$ having some agreed interpretation as an operation or relation on functions (more formally we would associate each $\divideontimes_i$ with a formula in the multi-sorted language of concrete algebras of functions).  Similarly, let $\divideontimes$ (of arity $n$, say) be an operation, with some agreed definition on functions.  We say that $\divideontimes$ is \emph{abstractly definable} from $\{\divideontimes_i\mid i\in I\}$ (for functions) if there is a first order formula $\phi(x,x_1,\dots,x_n)$ in the signature $\{\divideontimes_i\mid i\in I\}$ such that in any concrete algebra of functions in the signature $\{\divideontimes_i\mid i\in I\}$, we have $h=\divideontimes(f_1,\dots,f_n)$ if and only if $\phi(h,f_1,\dots,f_n)$ holds.  Note that we do not require that the algebra be closed under $\divideontimes$.  The simplest instance of abstract definability is when $\divideontimes$ is simply a term function in $\{\divideontimes_i\mid i\in I\}$.  Thus for example, $f[g]=(g\sqcup f)\mathbin{@}f$ so that $\underline{\phantom{a}}[\underline{\phantom{a}}]$ is a term function in $\{\mathbin{@},\sqcup\}$ and the formula $\phi(x,x_1,x_2)$ defining $\underline{\phantom{a}}[\underline{\phantom{a}}]$ is simply $x= (x_2\sqcup x_1)\mathbin{@} x_1$.  In this situation, every functional model of $\{\divideontimes_i\mid i\in I\}$ is automatically a model of~$\divideontimes$.  A familiar example of the more general form of abstract definability (albeit concerning algebras of permutations rather than algebras of functions) is the definability of inverse in semigroups of permutations by way of composition and identity: the defining formula $\phi(x,x_1)$ is $x_1x=1\And xx_1=1$.  Earlier work of the authors and their coauthors \cite[\S3.3]{modrest}, \cite[\S3.4]{HJSz}, used examples of abstract definability to extend a representations for function composition and the antidomain operations to include the operation $\sqcup$. The following lemma gives a further example that plays an important role in our main proof.

\begin{lem}\label{lem:updatedef1}
The update operation $\underline{\phantom{a}}[\underline{\phantom{a}}]$ is abstractly definable from override $\sqcup$ by the  formula $\phi_1(x,x_1,x_2)$ consisting of the conjunction of the following equalities\up:
\begin{align}
%x\sqcup x_1 = x
x\sqcup x_{1}&= x\textup{ and } x_1\sqcup x = x_1\quad \textup{($x$ has same domain as $x_1$)}\label{eq:samedomain}\\
% &
x\sqcup x_2 &=x_2\sqcup x  \quad\textup{ \textup($x$ agrees with $x_2$ where both $x$ and $x_2$ are defined\ldots \textup)}\label{eq:agree}\\
x_2\sqcup x_1&=x_2\sqcup x\quad\textup{ \textup(\ldots and with $x_1$ where $x$ is defined and $x_2$ is not\textup)}\label{eq:agree2}
\end{align}
\end{lem}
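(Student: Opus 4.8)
The plan is to translate each of the three equalities \eqref{eq:samedomain}, \eqref{eq:agree}, \eqref{eq:agree2} into a transparent statement about domains and values, and then observe that their conjunction is exactly the pointwise description of the update $x_1[x_2]$. Because $\sqcup$ acts pointwise, the entire verification reduces to a finite case analysis over the membership of a point $p\in X$ in the three domains $\dom(x)$, $\dom(x_1)$, $\dom(x_2)$; I would carry this out one equation at a time.

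First I would handle \eqref{eq:samedomain}. The equation $x\sqcup x_1=x$ forces $\dom(x_1)\subseteq\dom(x)$, since a point of $\dom(x_1)\setminus\dom(x)$ would lie in the domain of the left side but not the right; on $\dom(x)$ the override already returns the value of $x$, so no constraint on values survives. Symmetrically $x_1\sqcup x=x_1$ gives $\dom(x)\subseteq\dom(x_1)$, so \eqref{eq:samedomain} is equivalent to $\dom(x)=\dom(x_1)$. For \eqref{eq:agree}, both $x\sqcup x_2$ and $x_2\sqcup x$ have domain $\dom(x)\cup\dom(x_2)$ and agree off the overlap, while on $\dom(x)\cap\dom(x_2)$ the left side returns the value of $x$ and the right returns that of $x_2$; thus \eqref{eq:agree} says precisely that $x$ and $x_2$ agree wherever both are defined. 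For \eqref{eq:agree2} I would first invoke \eqref{eq:samedomain} to replace $\dom(x_1)$ by $\dom(x)$; then $x_2\sqcup x_1$ and $x_2\sqcup x$ share the domain $\dom(x_2)\cup\dom(x)$ and agree on $\dom(x_2)$, while on $\dom(x)\setminus\dom(x_2)$ they return the values of $x_1$ and $x$ respectively. Hence, given \eqref{eq:samedomain}, equation \eqref{eq:agree2} is equivalent to $x$ and $x_1$ agreeing on $\dom(x)\setminus\dom(x_2)$.

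Finally I would compare the three extracted conditions with the definition of update. Since $x_1[x_2]$ has domain $\dom(x_1)$, equals $x_2$ on $\dom(x_1)\cap\dom(x_2)$, and equals $x_1$ on $\dom(x_1)\setminus\dom(x_2)$, the identification $\dom(x)=\dom(x_1)$ together with the two agreement conditions says exactly that $x$ and $x_1[x_2]$ have the same domain and the same values on it, i.e.\ $x=x_1[x_2]$. Running the equivalences backwards gives the converse: if $x=x_1[x_2]$ then each of the three semantic conditions holds, whence $\phi_1(x,x_1,x_2)$.

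The argument is routine, and the one place to be careful---which I would flag as the main pitfall---is \eqref{eq:agree2}, whose equivalence with agreement on $\dom(x)\setminus\dom(x_2)$ relies on first knowing $\dom(x)=\dom(x_1)$; without the prior reduction from \eqref{eq:samedomain} the two sides of \eqref{eq:agree2} need not even share a domain, so the order of the steps is essential.
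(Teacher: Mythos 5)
Your proof is correct and follows the same route as the paper's: the paper simply notes that the parenthetical glosses of \eqref{eq:samedomain}--\eqref{eq:agree2} are easily verified for functions with $x=x_1[x_2]$ and that these conditions uniquely determine $x_1[x_2]$, which is exactly the pointwise case analysis you carry out in detail. Your added observation that \eqref{eq:agree2} should be read only after \eqref{eq:samedomain} has equalised the domains is a worthwhile clarification but does not change the argument.
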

\begin{proof}
The parenthetical remarks are easily verified as true statements when $x,x_1,x_2$ are functions, and $x=x_1[x_2]$.  It is also trivial that these properties uniquely define the value of $x_1[x_2]$, so any function $x$ satisfying the equalities  must coincide with $x_1[x_2]$.
\end{proof}

The following theorem demonstrates the utility of abstract definability in finding axiomatisations.  We include the easy proof for completeness.
\begin{thm}\label{thm:abstractdef}
If $\divideontimes$ is abstractly definable by $\phi(x,x_1,\dots,x_n)$ from $\{\divideontimes_i\mid i\in I\}$ for functions and $\Sigma$ is a sound and complete axiomatisation for 
$\Rep(\{\divideontimes_i\mid i\in I\})$, then $\Sigma\cup\{\forall x_1\dots\forall x_n\,\phi(\divideontimes(x_1,\dots,x_n),x_1,\dots,x_n)\}$ is a sound and complete  axiomatisation for 
$\Rep(\{\divideontimes_i\mid i\in I\}\cup\{\divideontimes\})$.
\end{thm}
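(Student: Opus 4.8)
The plan is to establish the two set inclusions that together express that $\Sigma':=\Sigma\cup\{\psi\}$ axiomatises $\Rep(\tau^+)$, where I abbreviate $\tau=\{\divideontimes_i\mid i\in I\}$, $\tau^+=\tau\cup\{\divideontimes\}$, and $\psi$ denotes the adjoined sentence $\forall x_1\cdots\forall x_n\,\phi(\divideontimes(x_1,\dots,x_n),x_1,\dots,x_n)$. Soundness amounts to $\Rep(\tau^+)\subseteq\mathrm{Mod}(\Sigma')$ and completeness to the reverse inclusion. The single fact I would lean on throughout is that abstract definability is an assertion about satisfaction \emph{inside} an arbitrary concrete algebra of functions, so it applies verbatim to whatever function algebra is produced when the $\tau$-reduct is represented.

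For soundness I would take $A\in\Rep(\tau^+)$ and assume without loss that $A$ is itself a $\tau^+$-algebra of functions, say $A\le\mathcal{P}(X,Y)$. Its $\tau$-reduct is a $\tau$-algebra of functions and hence representable, so soundness of $\Sigma$ gives $A\models\Sigma$. For the new axiom, fix $f_1,\dots,f_n\in A$; here $\divideontimes(f_1,\dots,f_n)$ is the genuine operation and lies in $A$. Abstract definability, applied to the concrete algebra $A$, reads $h=\divideontimes(f_1,\dots,f_n)\Leftrightarrow A\models\phi(h,f_1,\dots,f_n)$, and taking $h=\divideontimes(f_1,\dots,f_n)$ makes the left-hand equality trivially true. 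Thus $A\models\phi(\divideontimes(f_1,\dots,f_n),f_1,\dots,f_n)$ for every tuple, i.e.\ $A\models\psi$, and so $A\models\Sigma'$.

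The substance of the proof is the completeness inclusion. Starting from $A\models\Sigma'$, the $\tau$-reduct $A|_\tau$ models $\Sigma$, so completeness of $\Sigma$ furnishes an isomorphism $\iota\colon A|_\tau\to B$ onto a $\tau$-algebra of functions $B\le\mathcal{P}(X,Y)$. Since we do not know in advance that $B$ is closed under the genuine $\divideontimes$, the key step is to prove exactly this, together with the fact that $\iota$ respects $\divideontimes$; these two facts upgrade $\iota$ to a $\tau^+$-isomorphism and place $A$ in $\Rep(\tau^+)$. Given $f_1,\dots,f_n\in B$, I would set $a_j=\iota^{-1}(f_j)$ and $a=\divideontimes^A(a_1,\dots,a_n)$; the axiom $\psi$ gives $A\models\phi(a,a_1,\dots,a_n)$, and transporting along the $\tau$-isomorphism $\iota$ yields $B\models\phi(\iota(a),f_1,\dots,f_n)$. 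A final appeal to abstract definability, this time in $B$, identifies $\iota(a)$ with the genuine $\divideontimes(f_1,\dots,f_n)$. Hence $\divideontimes(f_1,\dots,f_n)\in B$ and $\iota(\divideontimes^A(a_1,\dots,a_n))=\divideontimes(\iota(a_1),\dots,\iota(a_n))$, as required.

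The one point demanding care --- and where the hypotheses are genuinely used --- is the transport of $\phi$ between $A$ and $B$ and between $B$ and its ambient $\mathcal{P}(X,Y)$. Because $\phi$ may carry quantifiers, I cannot simply evaluate it termwise; instead I would invoke that first-order satisfaction is preserved by the isomorphism $\iota$ (so that quantifiers range over $B$, matching $A$), and that abstract definability is stated as satisfaction \emph{within} the concrete function algebra $B$ rather than within $\mathcal{P}(X,Y)$. Keeping $\phi$ evaluated in $B$ throughout is what makes the identification $\iota(a)=\divideontimes(f_1,\dots,f_n)$ legitimate. Once this bookkeeping is respected, neither inclusion presents any further difficulty, so the real work of the theorem is entirely contained in correctly matching the quantifier ranges.
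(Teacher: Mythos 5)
Your proposal is correct and follows essentially the same route as the paper's proof: soundness via applying abstract definability inside any concrete $\tau^+$-algebra, and completeness by representing the $\tau$-reduct and then using the adjoined law together with abstract definability to see that $\divideontimes$ is correctly represented. The paper compresses the completeness step into a single sentence, whereas you spell out the transport of $\phi$ along the isomorphism and the closure of the image under the genuine operation, but the underlying argument is identical.
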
 
\begin{proof}
Certainly $\Sigma\cup\{\forall x_1\dots\forall x_n\,\phi( \divideontimes(x_1,\dots,x_n),x_1,\dots,x_n)\}$ is a sound set of laws for $\{\divideontimes_i\mid i\in I\}\cup\{\divideontimes\}$ on functions, as we assumed that $\phi$ defined $\divideontimes$ from $\{\divideontimes_i\mid i\in I\}$.  For completeness, we need to show that any model is representable.  Now any model of $\Sigma\cup\{\forall x_1\dots\forall x_n\,\phi( \divideontimes(x_1,\dots,x_n),x_1,\dots,x_n)\}$ is a model of $\Sigma$, and as $\Sigma$ is complete for representability of the operations $\{\divideontimes_i\mid i\in I\}$, it follows that there is a representation in this signature.  But $\divideontimes$ is abstractly definable by $\phi(x,x_1,\dots,x_n)$, so that law $\forall x_1\dots\forall x_n\,\phi( \divideontimes(x_1,\dots,x_n),x_1,\dots,x_n)$ ensures that $\divideontimes$ is also correctly represented, as required.
\end{proof}

\section{Starting from override}\label{sec:override}
We now explore a connection between override and combinatorial geometry, leading to an inherently infinite axiomatization.

Let $\mathcal{H}$ denote a finite set of hyperplanes in $\mathbb{R}^n$ (that is, subspaces of dimension $n-1$), each including the origin.  Each $H\in \mathcal{H}$ partitions the space $\mathbb{R}^n$ into three regions: the set $H$ itself, along with two ``sides''.  If these sides are assigned values $+,-$ arbitrarily, then the family $\mathcal{H}$ (with, say, $|\mathcal{H}|=m$) partitions $\mathbb{R}^n$ into $3^m$ regions -- or ``faces'' -- with two points in the same region according to whether or not they satisfy the same relationships with respect to every $H\in \mathcal{H}$ (namely, for each $H\in \mathcal{H}$, either both lie in $H^+$, both in $H^-$, or both on $H$).  Such a family of regions carries a natural multiplication, with $F\cdot G$ denoting the region reached from any point $p$ of $F$ by travelling any sufficiently small distance in a straight line toward any point $q$ of $G$.  Note that if $F$ consists of a region that does not intersect any of the hyperplanes in $\mathcal{H}$, then $F\cdot G=F$.  

This algebra is known as a \emph{hyperplane face monoid}: the operation $\cdot$ is associative, and in fact is a left regular band operation; see Howie \cite[\S4.4]{how}.  Subsemigroups of hyperplane face monoids correspond to families of hyperplane faces closed under $\cdot$; we refer to these as \emph{hyperplane face semigroups}.  In Margolis, Saliola and Steinberg~\cite{MSS} it was shown that the class of semigroups isomorphic to hyperplane semigroups corresponds to the quasivariety generated by the three element monoid~${\bf L}$:
\[
\begin{tabular}{c|ccc}
$\cdot$&0&+&$-$\\
\hline
0&0&+&$-$\\
+&+&+&+\\
$-$&$-$&$-$&$-$
\end{tabular}
\]
(Note that ${\bf L}$ would usually be denoted by $L^1$ in semigroup theory, as it is the result of adjoining an identity element $1$ to the two-element left zero semigroup, but in our case the element $0$ is notationally playing the role of $1$.  However ${\bf L}$ is used in \cite{MSS}, and it is notationally convenient to continue this in the present article.)
The article~\cite{MSS} also shows that the following infinite set $\Sigma_{\bf L}$ of axioms completely axiomatises the quasivariety generated by ${\bf L}$ (and no finite axiomatisation exists).
\begin{align*}
x(yz)&=(xy)z\\
xx&=x\\
xyx&=xy\\
\left\{\begin{matrix} xy=x\And yx=y\\
\And  z_1x=x\And z_{2n+1}y=y
\end{matrix}\right\}\bigand &
\left\{
\begin{matrix}
\bigand_{1\leq i\leq 2n-1} z_i x=z_iy\\
 \bigand_{1\leq i\leq n} 
    \left(
        \begin{matrix}
        z_{2i-1}z_{2i}=z_{2i}\\ 
        z_{2i+1}z_{2i}=z_{2i}
        \end{matrix}
    \right)
\end{matrix}
\right\}
\rightarrow x=y
\end{align*}
We will refer to the $2n+3$ variable quasi-equation in $\Sigma_{\bf L}$ by $\lambda_n$.

Note that ${\bf L}$ is (isomorphic to) ${\bf 3}_{\{\sqcup\}}$, the $\sqcup$-reduct of ${\bf 3}$, and hence is representable as a $\sqcup$-algebra of functions.  Moreover, because all hyperplane monoids embed into a power of ${\bf L}$, it follows that all hyperplane monoids are representable as $\sqcup$-algebras of functions.  It is convenient to make this representation explicit.
For a family of hyperplanes $\mathcal{H}$ (with each $H$ having $H^+$ and $H^-$ fixed), and a face $F$, define the \emph{partial characteristic function} $\chi_F:\mathcal{H}\to\{+,-\}$ by
\[
\chi_F(H)=
\begin{cases}
\text{undefined}&\text{ if }F\subseteq H\\
+&\text{ if }F\subseteq H^+\\
-&\text{ if }F\subseteq H^-
\end{cases}
\]
It is essentially trivial to verify that the operation $\cdot$ on faces corresponds precisely to $\sqcup$ on the corresponding partial characteristic functions. 

\begin{cor}\label{cor:sqcupaxioms}
The class $\Rep(\sqcup)$ of algebras representable as $\sqcup$-algebras of functions is precisely the class of semigroups in the quasivariety of ${\bf L}$, and coincides at the finite level with the class of semigroups isomorphic to hyperplane face semigroups. The class is completely axiomatised by $\Sigma_{\bf L}$, and no complete finite axiomatisation is possible in first order logic.
\end{cor}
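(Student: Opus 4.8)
The plan is to read off all four assertions from Theorem~\ref{thm:3tau}, transporting the facts about ${\bf L}$ recorded from \cite{MSS}. First I would specialise Theorem~\ref{thm:3tau} to $\tau=\{\sqcup\}$, which gives $\Rep(\sqcup)=\mathsf{SP}({\bf 3}_{\{\sqcup\}})$; since ${\bf 3}_{\{\sqcup\}}\cong{\bf L}$, as already observed, this reads $\Rep(\sqcup)=\mathsf{SP}({\bf L})$. Because ${\bf L}$ is finite, every ultraproduct of copies of ${\bf L}$ is isomorphic to ${\bf L}$, so further closure under ultraproducts changes nothing and $\mathsf{SP}({\bf L})$ is exactly the quasivariety generated by ${\bf L}$. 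Every member is a semigroup, since $\sqcup$ is associative. This gives the first assertion.

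For the finite-level statement I would simply quote the theorem of \cite{MSS} that a finite semigroup is isomorphic to a hyperplane face semigroup precisely when it lies in the quasivariety of ${\bf L}$; combined with the identification just made, the finite members of $\Rep(\sqcup)$ are exactly the hyperplane face semigroups up to isomorphism. (One direction is also visible concretely through the map $F\mapsto\chi_F$ discussed above, which realises each finite hyperplane face semigroup as a $\sqcup$-subalgebra of $\langle\mathcal{P}(\mathcal{H},\{+,-\}),\sqcup\rangle$.) The completeness of $\Sigma_{\bf L}$ is then immediate: \cite{MSS} shows $\Sigma_{\bf L}$ axiomatises the quasivariety generated by ${\bf L}$, and we have identified that quasivariety with $\Rep(\sqcup)$, so $\Sigma_{\bf L}$ is sound and complete for $\Rep(\sqcup)$.

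The point needing the most care, and the main obstacle, is upgrading the non-finite-axiomatisability of \cite{MSS} (which concerns quasi-equational bases) to the stronger statement that no finite axiomatisation in first order logic exists. For a quasivariety the two coincide, by a compactness argument that I would spell out. Write $\Sigma$ for the set of all quasi-equations valid in $\Rep(\sqcup)$, and suppose for contradiction that some finite set $\Phi$ of first order sentences axiomatises $\Rep(\sqcup)$; replacing $\Phi$ by the conjunction of its members we may take $\Phi=\{\phi\}$. Since $\Sigma$ and $\phi$ define the same class, $\Sigma\models\phi$, so by compactness a finite $\Sigma_0\subseteq\Sigma$ already satisfies $\Sigma_0\models\phi$. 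As $\phi\models\sigma$ for every $\sigma\in\Sigma$, we obtain $\Sigma_0\models\Sigma$, so $\Sigma_0$ is a finite quasi-equational basis for $\Rep(\sqcup)$, contradicting \cite{MSS}. Hence no finite first order axiomatisation is possible, completing the proof.
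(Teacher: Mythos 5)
Your proposal is correct and follows essentially the same route as the paper, which simply observes that ${\bf 3}_{\{\sqcup\}}\cong{\bf L}$ so the first claim is the $\tau=\{\sqcup\}$ case of Theorem~\ref{thm:3tau}, and cites \cite{MSS} for the rest. The only addition is that you spell out the (correct, standard) compactness argument upgrading the absence of a finite quasi-equational basis to the absence of any finite first order axiomatisation, a detail the paper leaves implicit in its citation of \cite{MSS}.
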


\begin{proof}
Note that ${\bf 3}_{\{\sqcup\}}$ is ${\bf L}$, so that the first statement is just the $\tau=\{\sqcup\}$ case of Theorem \ref{thm:3tau}.  The remaining statements are from the main results of \cite{MSS}.
\end{proof}

An alternative proof of Corollary \ref{cor:sqcupaxioms}, based on the work of \cite{MSS} is as follows, and may be useful in aiding the reader to understand the laws in $\Sigma_{\bf L}$ and some of the effort used to prove them in the next section.  First, the quasivariety generated by ${\bf L}$ consists of representable algebras, so it suffices to show that every representable algebra lies in this quasivariety.  Next verify that the laws $\Sigma_{\bf L}$ are sound for representable $\sqcup$-algebras.  Indeed, the equational laws are easily verified.  For the implications $\lambda_n$, observe that $x\sqcup y=y$ and $y\sqcup x=x$ together assert that $x$ and $y$ have the same domain.  The laws $z_i\sqcup x=z_i\sqcup y$ ensure that $x$ and $y$ agree outside of the places where each $z_i$ are defined.  In particular, $x$ and $y$ agree outside of the intersection of the domains of the $z_i$; let us denote this intersection by $D$.  Now, all the $z_i$ are in agreement on $D$ because they are alternately extensions and restrictions of each other: $z_1\sqcup z_2=z_2$ asserts that $z_1$ is a restriction of $z_2$, $z_3\sqcup z_2=z_2$ asserts that $z_2$ extends $z_3$, and so on.  But also, $x$ extends $z_1$ by $z_1\sqcup x=x$ and $y$ extends $z_{2n+1}$ by $z_{2n+1}\sqcup y=y$.  So $x$ and $y$ also agree with all the $z_i$ on $D$, and hence they agree everywhere that they are both defined.  Then $x=y$ as they have the same domain.

The variety generated by ${\bf L}$ is the variety of left regular bands, which is well known to be axiomatised by the first three laws of $\Sigma_{\bf L}$.  So we have the following corollary.

\begin{cor}\label{cor:weakcompsqcup}
The first three laws of $\Sigma_{\bf L}$ are weakly complete\textup: they are sound and complete for the equational theory of $\sqcup$-algebras of  functions.
\end{cor}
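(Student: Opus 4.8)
The plan is to combine Corollary~\ref{cor:sqcupaxioms} with Birkhoff's completeness theorem for equational logic, reducing the claim to the classical identification of the variety generated by ${\bf L}$. Since weak completeness asks only that the three laws be sound and complete for the \emph{equational} theory, the key move is that we may replace the quasivariety $\Rep(\sqcup)$ by the full variety it generates without changing which identities hold.

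For soundness it suffices to check the three laws in ${\bf L}={\bf 3}_{\{\sqcup\}}$: identities are preserved by the operators $\mathsf S$ and $\mathsf P$, and by Theorem~\ref{thm:3tau} every $\sqcup$-algebra of functions lies in $\mathsf{SP}({\bf L})$. Associativity and idempotence are immediate from the table, and $xyx=xy$ follows by a short case check; alternatively all three are verified directly for $\sqcup$ on partial functions.

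For completeness, recall that an identity holds throughout a class of algebras if and only if it holds throughout the variety $\mathsf{HSP}$ that the class generates, since the models of a single identity form a variety and are thus closed under $\mathsf H$, $\mathsf S$ and $\mathsf P$. By Theorem~\ref{thm:3tau} (equivalently, Corollary~\ref{cor:sqcupaxioms}) we have $\Rep(\sqcup)=\mathsf{SP}({\bf L})$, so the variety it generates is exactly $\mathsf{HSP}({\bf L})$, the variety generated by ${\bf L}$. Hence the equational theory of $\sqcup$-algebras of functions coincides with the equational theory of $\mathsf{HSP}({\bf L})$. As noted just before the statement, $\mathsf{HSP}({\bf L})$ is the variety of left regular bands, defined by precisely the first three laws of $\Sigma_{\bf L}$. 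Birkhoff's completeness theorem then asserts that the identities valid throughout a variety defined by a set $E$ of equations are exactly those derivable from $E$; applying this with $E$ the three band laws delivers completeness.

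There is no substantial obstacle here: the real content is already packaged in Corollary~\ref{cor:sqcupaxioms} (resting on the results of \cite{MSS}) and in the standard fact that ${\bf L}$ generates the variety of left regular bands. The only point demanding care is conceptual rather than technical, namely the contrast between axiomatising the \emph{quasivariety} $\Rep(\sqcup)$---which genuinely requires the infinitely many implications $\lambda_n$---and merely capturing its \emph{equational} theory, for which the three band laws suffice. It is exactly this weaker demand that licenses the passage from $\Rep(\sqcup)=\mathsf{SP}({\bf L})$ to the full variety $\mathsf{HSP}({\bf L})$ it generates.
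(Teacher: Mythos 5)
Your proof is correct and follows the same route as the paper: the paper likewise derives the corollary from $\Rep(\sqcup)=\mathsf{SP}({\bf L})$ together with the well-known fact that ${\bf L}$ generates the variety of left regular bands, axiomatised by the three band laws. You have merely spelled out the $\mathsf{SP}$-versus-$\mathsf{HSP}$ step and the appeal to Birkhoff's completeness theorem, which the paper leaves implicit.
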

%%%%%%%
%%%%%%%
%%%%%%%

\section{Override and update}

We now turn to the main signature of interest -- override and update, $\{\sqcup,\underline{\phantom{a}}[\underline{\phantom{a}}]\}$ -- and solve the fundamental problem stated in \cite{minusover} and \cite{CLS}.   

To begin with we observe that axioms  \eqref{eq:absorb}--\eqref{eq:rightdist} (along with associativity and idempotence of $\sqcup$) that are presented in \cite{minusover} are not complete; similarly with those given in \cite[pp.~262]{CLS}.  Indeed, the following valid law is not a consequence:
\begin{align}
x[y[x[z]]]=x[y[x][z]].\label{eq:special}
\end{align}
Validity is easily proved; see Figure \ref{fig:3}.  \begin{figure}
\begin{tikzpicture}

\fill [pattern=north west lines]  (0,0)  circle  (1);
\scope
\clip 
      (0,0)  circle  (1);
   \clip   (0.625,1)  circle  (1);
\fill [white]  (1.25,0)  circle  (1);
\endscope
\scope
\clip 
      (0,0)  circle  (1);
   \clip   (0.625,1)  circle  (1);
\fill [pattern=north east lines]  (1.25,0)  circle  (1);
\endscope
\draw [ultra thick] (0,0)  circle  (1);
\draw [ultra thick] (1.25,0)  circle  (1);
\draw [ultra thick] (0.625,1)  circle  (1);
\node at (0.1,0.6) {$x$};
\node at (-1,-.9) {$x$};
\fill [white] (-.4,-.4) circle (.12);
\node at (-0.4,-0.4) {$x$};
\fill [white] (0.1,0.6) circle (.12);\node at (0.1,0.6) {$x$};
\fill [white] (0.62,-0.4) circle (.12);\node at (0.62,-.4) {$x$};
\fill [white] (0.62,.3) circle (.12);\node at (0.62,.3) {$z$};
\node at (-.5,1.5) {$y$};
\node at (2.2,-.9) {$z$};
\end{tikzpicture}
\caption{Both sides of the law  $x[y[x[z]]]=x[y[x][z]]$ yield this patterned Venn diagram.  The domain of definition is that of $x$, but the pattern is that of $z$ on the intersection of the three domains.}\label{fig:3}
\end{figure}
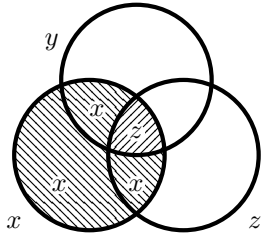
{}{\em Mace4} quickly finds a 16-element counterexample demonstrating the underivability of \eqref{eq:special} from \eqref{eq:absorb}--\eqref{eq:rightdist}. %; see Table \ref{table:counter}.  
A further law of similar structure to that of \eqref{eq:special} is
\begin{equation}
w[x[y[w[z]]]]=w[x[y[w][z]]].\label{eq:special2}
\end{equation}
Equation \eqref{eq:special2} is also routinely shown to be sound, however we use it only to simplify our proof and note in Remark~\ref{rem:redundancies} that it is in fact redundant.

\begin{thm}\label{thm:overrideupdate}
The laws \eqref{eq:absorb}\up{--}\eqref{eq:rightdist}, \eqref{eq:special}, \eqref{eq:special2} along with idempotence and associativity of $\sqcup$ constitute a complete equational axiomatisation for the class $\Rep(\{\sqcup,\underline{\phantom{a}}[\underline{\phantom{a}}]\})$ of algebras in the signature $\{\sqcup,\underline{\phantom{a}}[\underline{\phantom{a}}]\}$ representable as systems of  functions.
\end{thm}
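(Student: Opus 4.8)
The plan is to derive completeness of the finite system $T$ (comprising \eqref{eq:absorb}--\eqref{eq:rightdist}, \eqref{eq:special}, \eqref{eq:special2} together with idempotence and associativity of $\sqcup$) from the infinite, but already established, axiomatisation of the override reduct. Soundness of each law is routine, with \eqref{eq:special} and \eqref{eq:special2} checked as in Figure \ref{fig:3}, so the substance is completeness: I must show that every model of $T$ lies in $\Rep(\{\sqcup,\underline{\phantom{a}}[\underline{\phantom{a}}]\})$.

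First I would produce an infinite complete axiomatisation for the full signature. By Corollary \ref{cor:sqcupaxioms}, $\Sigma_{\bf L}$ is sound and complete for $\Rep(\sqcup)$; by Lemma \ref{lem:updatedef1} update is abstractly definable from $\sqcup$ by the conjunction of equalities $\phi_1$; so Theorem \ref{thm:abstractdef} yields that $\Sigma':=\Sigma_{\bf L}\cup U$ is sound and complete for $\Rep(\{\sqcup,\underline{\phantom{a}}[\underline{\phantom{a}}]\})$, where $U$ is the finite set of equations obtained by instantiating $\phi_1$ at $x=x_1[x_2]$ (that is, \eqref{eq:samedomain}--\eqref{eq:agree2} with $x$ replaced by $x_1[x_2]$). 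Completeness of $T$ therefore reduces to showing $T\vdash\Sigma'$: that $T$ entails the three left regular band equations, the finitely many update equations $U$, and each of the infinitely many quasi-equations $\lambda_n$. The first two batches are finite, and I would dispatch them by direct derivation (associativity and idempotence are already in $T$, while $xyx=xy$ and the members of $U$ are short consequences, verifiable by \emph{Prover9}).

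The heart of the proof is to establish $T\vdash\lambda_n$ for every $n$, where the presence of update is decisive because it permits internalising the quasi-equation's hypotheses. In function terms these hypotheses say that $x$ and $y$ share a domain, $x$ extends $z_1$, $y$ extends $z_{2n+1}$, that $x$ and $y$ agree off $D=\bigcap_{i\le 2n-1}\dom(z_i)$, and that the $z_i$ form an alternating chain of restrictions and extensions forcing agreement on $D$. Using the update equations $U$ together with these hypotheses, I would rewrite both $x$ and $y$ as a single common update expression in $z_1,\dots,z_{2n+1}$, thereby reducing the implication $\lambda_n$ to a pure identity $\epsilon_n$ between update terms; since $T\vdash U$, any $T$-model then satisfies $\lambda_n$ once it satisfies $\epsilon_n$. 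It remains to prove $T\vdash\epsilon_n$ for all $n$, which I would attempt by induction on $n$: the base cases are supplied by \eqref{eq:special} and \eqref{eq:special2} (which differ by one level of nesting, matching the two parities of the alternating chain), while the inductive step peels the outermost chain variables $z_{2n},z_{2n+1}$ off $\epsilon_{n+1}$ and collapses them using \eqref{eq:special} and \eqref{eq:special2} in concert with the distributivity laws \eqref{eq:leftdist}, \eqref{eq:rightdist} and absorption laws \eqref{eq:absorb}, \eqref{eq:absorb2}, reducing the instance to $\epsilon_n$.

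The main obstacle is precisely this inductive collapse, and, before it, the discovery of the correct equational form $\epsilon_n$. The difficulty is twofold: one must find the update expression that simultaneously captures $x$ and $y$ under the hypotheses, so that the quasi-equation genuinely becomes an identity; and one must verify that two additional laws, no more, suffice to telescope the entire family, with \eqref{eq:special2} supplying the extra ``buffer'' variable $w$ needed for the reduction $\epsilon_{n+1}\to\epsilon_n$ to survive inside an arbitrary surrounding update context. I expect the individual equational derivations in the base case and inductive step to be carried out with \emph{Prover9}, but the inductive scheme itself --- recognising that the whole family $\lambda_n$ collapses onto the two laws \eqref{eq:special} and \eqref{eq:special2} --- is the genuinely nontrivial step and the crux of resolving the problem of Berendsen et al.
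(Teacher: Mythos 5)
Your overall architecture is the same as the paper's: reduce to the infinite complete axiomatisation $\Sigma_{\bf L}\cup U$ obtained from Corollary~\ref{cor:sqcupaxioms}, Lemma~\ref{lem:updatedef1} and Theorem~\ref{thm:abstractdef} (this is exactly Proposition~\ref{pro:overrideupdate}), dispatch the finitely many equational obligations by direct derivation, and then derive every $\lambda_n$ from the finite system using deep nesting of update together with \eqref{eq:special} and \eqref{eq:special2}. Up to and including Proposition~\ref{pro:overrideupdate} your plan is correct and matches the paper.

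The gap is that the entire hard part --- showing $T\vdash\lambda_n$ for all $n$ --- is only a plan, and the specific mechanism you propose does not survive scrutiny. You assert that the hypotheses of $\lambda_n$ can be internalised so that $x$ and $y$ are both rewritten as ``a single common update expression in $z_1,\dots,z_{2n+1}$,'' turning $\lambda_n$ into a pure identity $\epsilon_n$. But $x$ and $y$ are not determined by the $z_i$ under the premises (they take arbitrary, merely coinciding, values off $\bigcap_i\dom(z_i)$), so any such expression must still contain $x$ and $y$, and the premises of $\lambda_n$ must be carried along as side conditions throughout the derivation rather than eliminated once. This is what the paper actually does: it derives a stock of auxiliary \emph{conditional} laws (\eqref{eq:updateright}--\eqref{eq:samedom}), proves by induction on $n$ two implication schemas $\eta_n$ and $\eta_n'$ whose premises are the chain conditions $x_i[x_{i+1}]=x_i$, and only then assembles the derivation of $x=y$ from the premises of $\lambda_n$ via a chain of conditional rewrites. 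Your description of the roles of the two extra axioms is also off: \eqref{eq:special} supplies the base case $\eta_2$ and the passage from $\eta_n$ to $\eta_n'$, while \eqref{eq:special2} is used in the inductive step $\eta_k\Rightarrow\eta_{k+1}$ (and is in fact redundant, per Remark~\ref{rem:redundancies}); they are not ``base cases for the two parities of the chain.'' Since you explicitly defer the discovery of $\epsilon_n$ and the inductive collapse as ``the main obstacle,'' the crux of the theorem --- precisely the content that distinguishes it from the already-known Proposition~\ref{pro:overrideupdate} --- remains unproved in your proposal.
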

\begin{remark}\label{rem:redundancies}
There are redundancies in the axioms given in Theorem \ref{thm:overrideupdate} and a complete axiomatisation for $\{\sqcup,\underline{\phantom{a}}[\underline{\phantom{a}}]\}$ can be given by the laws \eqref{eq:absorb}, \eqref{eq:absorb2}, \eqref{eq:leftdist}, \eqref{eq:special} along with idempotence and associativity of $\sqcup$.  These redundancies can be easily established using {\em Prover9}.  We have stated Theorem \ref{thm:overrideupdate} with the larger redundant set, as these are the laws used in the proof presented here.  The {\em Prover9} proofs for the redundancy claim are not excessively complex, and human readable proofs could be obtained, but would add unnecessary routine technical tedium to the article.  The reader wishing to avoid further technical deductions can omit Lemmas~\ref{lem:firstlot} and~\ref{lem:secondlot}, provided that the laws proved there are added to the list of axioms in Theorem \ref{thm:overrideupdate}.
\end{remark}
The proof of Theorem~\ref{thm:overrideupdate} covers the rest of the section.  We prove completeness by first deriving a complete axiomatisation from the results of Section~\ref{sec:override}.  This axiomatisation is infinite and involves quasiequations as well as equations.  We then show that these infinitely many laws are consequences of the axioms in Theorem~\ref{thm:overrideupdate}.  %We note that completeness in Theorem \ref{thm:overrideupdate} means with respect to the full first order theory of representable algebras, not simply the weak completeness (equational consequences) considered in \cite{minusover}.  

We begin by recalling that by Lemma \ref{lem:updatedef1}, the update operation is abstractly definable from override, using the formula $\phi_1(x,y,z)$.  Thus by Theorem~\ref{thm:abstractdef}, any complete axiomatisation of $\sqcup$ (such as $\Sigma_{\bf L}$) can be expanded to a complete axiomatisation of $\sqcup, \underline{\phantom{a}}[\underline{\phantom{a}}]$ by the addition of the law $\phi_1(x[y],x,y)$, which as a conjunction of equations, is equivalent to the following axioms (replacing the solution~$x$ in \eqref{eq:samedomain},~\eqref{eq:agree},~\eqref{eq:agree2} by the value $x_1[x_2]$, and renaming $x_1$ as $x$ and $x_2$ as $y$):
\begin{align}
x[y]\sqcup x&=x[y],\label{eq:domaineq1}\\
x\sqcup x[y]&=x,\label{eq:domaineq2}\\ 
x[y]\sqcup y&=y\sqcup x[y],\label{eq:agreement1}\\
y\sqcup x[y]&=y\sqcup x.\label{eq:agreement2}
\end{align}
We state this as a proposition.
\begin{pro}\label{pro:overrideupdate}
A sound and complete axiomatisation for $\Rep(\{\sqcup,\underline{\phantom{a}}[\underline{\phantom{a}}]\})$ is obtained by adjoining \eqref{eq:domaineq1}\up{--}\eqref{eq:agreement2} to $\Sigma_{\bf L}$.  
\end{pro}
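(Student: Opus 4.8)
The plan is to obtain Proposition~\ref{pro:overrideupdate} as an immediate application of the abstract-definability machinery assembled in the preliminaries, with essentially no further work beyond a syntactic check. The three ingredients are already in hand: Lemma~\ref{lem:updatedef1} establishes that the update operation $x[y]$ is abstractly definable from override $\sqcup$ by the formula $\phi_1(x,x_1,x_2)$, which is the conjunction of \eqref{eq:samedomain}, \eqref{eq:agree} and \eqref{eq:agree2}; Corollary~\ref{cor:sqcupaxioms} supplies a sound and complete axiomatisation $\Sigma_{\bf L}$ for $\Rep(\{\sqcup\})$; and Theorem~\ref{thm:abstractdef} is the general transfer principle that lets us combine these.

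Concretely, I would invoke Theorem~\ref{thm:abstractdef} with $\divideontimes$ taken to be the binary update operation (so $n=2$), with $\{\divideontimes_i\mid i\in I\}=\{\sqcup\}$, with defining formula $\phi=\phi_1$, and with $\Sigma=\Sigma_{\bf L}$. The theorem then yields directly that $\Sigma_{\bf L}\cup\{\forall x_1\forall x_2\,\phi_1(x_1[x_2],x_1,x_2)\}$ is a sound and complete axiomatisation for $\Rep(\{\sqcup,\underline{\phantom{a}}[\underline{\phantom{a}}]\})$. It remains only to recognise the adjoined law as the stated equations. Substituting $x=x_1[x_2]$ into the three conjuncts of $\phi_1$ and renaming $x_1$ as $x$ and $x_2$ as $y$, the pair \eqref{eq:samedomain} becomes \eqref{eq:domaineq1} and \eqref{eq:domaineq2}, while \eqref{eq:agree} becomes \eqref{eq:agreement1} and \eqref{eq:agree2} becomes \eqref{eq:agreement2}. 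Hence adjoining \eqref{eq:domaineq1}--\eqref{eq:agreement2} to $\Sigma_{\bf L}$ is exactly the axiom system produced by Theorem~\ref{thm:abstractdef}, and the proposition follows.

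There is no substantial obstacle in the proposition itself: the genuine content has already been expended in Corollary~\ref{cor:sqcupaxioms} (which rests on the hyperplane-face results of \cite{MSS}) and in verifying the abstract definability of Lemma~\ref{lem:updatedef1}. The only points demanding any care are bookkeeping ones. First, one must confirm that the instantiated formula $\phi_1(x_1[x_2],x_1,x_2)$ literally coincides with the four displayed equalities after the renaming, which is the purely syntactic check carried out above. Second, it is worth emphasising that the completeness delivered here is \emph{full} first-order completeness rather than merely \emph{weak} (equational) completeness: Theorem~\ref{thm:abstractdef} guarantees that every model of the axioms is representable, so the entire first-order theory of $\{\sqcup,\underline{\phantom{a}}[\underline{\phantom{a}}]\}$-algebras of functions is captured, not just its equational fragment. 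This is precisely what is needed to feed the argument into the subsequent reduction to a finite axiomatisation in Theorem~\ref{thm:overrideupdate}.
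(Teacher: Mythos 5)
Your proposal is correct and follows exactly the paper's own argument: the proposition is stated as an immediate consequence of Lemma~\ref{lem:updatedef1}, Corollary~\ref{cor:sqcupaxioms} and Theorem~\ref{thm:abstractdef}, with the same substitution and renaming turning $\phi_1(x[y],x,y)$ into the four displayed equations. Nothing to add.
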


Our goal now is to show that the laws in Proposition \ref{pro:overrideupdate} follow from the laws in Theorem \ref{thm:overrideupdate}.  A consequence of this is that there is a finite subset of $\Sigma_L$ which in the presence of \eqref{eq:domaineq1}--\eqref{eq:agreement2} are sufficient to yield all of $\Sigma_L$, though we are unsure at this point which finite subset is sufficient.  Surprisingly, {\em Prover9} finds that $\lambda_1$ and $\lambda_2$ are consequences of  \eqref{eq:absorb}--\eqref{eq:rightdist}, yet $\lambda_3$ is not.   We were able to use {\em Prover9} to demonstrate proofs that  $\lambda_3$--$\lambda_7$ were consequences of the full axiom system in Theorem \ref{thm:overrideupdate} (which involves the extra law \eqref{eq:special}), and even decipher very long human-readable proofs from these in the case of $\lambda_3$ and $\lambda_4$.  These intricate proofs did not provide many hints for a general proof for arbitrary $\lambda_n$.  The main influence on the final approach we present (for all $\lambda_n$) was confidence that the axioms in Theorem~\ref{thm:overrideupdate} were likely correct, and that deep nesting of $\underline{\phantom{a}}[\underline{\phantom{a}}]$ may play a role.

\begin{lem}\label{lem:firstlot}
The laws in Theorem \ref{thm:overrideupdate} imply \eqref{eq:domaineq1}\up{--}\eqref{eq:agreement2} as well as the equational laws in $\Sigma_L$ \up(associativity, idempotence and $x\sqcup y \sqcup x=x\sqcup y$\up).
\end{lem}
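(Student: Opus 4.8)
The plan is to derive all five required laws purely equationally from the hypotheses of Theorem~\ref{thm:overrideupdate}, and I expect that only \eqref{eq:absorb}, \eqref{eq:absorb2}, \eqref{eq:leftdist} together with associativity and idempotence of $\sqcup$ will actually be needed: the distributive law \eqref{eq:rightdist} and the special laws \eqref{eq:special}, \eqref{eq:special2} should come into play only for the quasi-equations $\lambda_n$ treated in the next lemma. Since associativity and idempotence are already among the hypotheses, the only equational law of $\Sigma_{\bf L}$ requiring genuine work is the left-regular band identity $x\sqcup y\sqcup x=x\sqcup y$.

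First I would isolate the auxiliary identity $x[y][x]=x$, obtained by instantiating \eqref{eq:leftdist} at $z:=x$ to get $x[y][x]=x[x\sqcup y]$ and then collapsing the right-hand side via \eqref{eq:absorb}. This small identity is the workhorse of the whole argument, since it is precisely the algebraic encoding of the fact that update does not alter the domain. With it in hand, the band identity follows by instantiating \eqref{eq:absorb2} at the pair $(x\sqcup y,\,x)$, which rewrites $(x\sqcup y)\sqcup x$ as $x[x\sqcup y]\sqcup(x\sqcup y)$, whereupon \eqref{eq:absorb} reduces $x[x\sqcup y]$ to $x$ and associativity with idempotence finish the job.

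Next I would handle the four definitional equalities. For \eqref{eq:domaineq2}, instantiate \eqref{eq:absorb2} at $(x,\,x[y])$ and apply the auxiliary identity $x[y][x]=x$ followed by idempotence. For the agreement laws, \eqref{eq:agreement1} comes from \eqref{eq:absorb2} at $(y,\,x[y])$ once $x[y][y]$ is reduced to $x[y]$ via \eqref{eq:leftdist} and idempotence; then \eqref{eq:agreement2} follows by reading \eqref{eq:absorb2} at $(y,\,x)$ as the identity $y\sqcup x=x[y]\sqcup y$ and chaining it with \eqref{eq:agreement1}.

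The step I expect to be the real obstacle is \eqref{eq:domaineq1}, namely $x[y]\sqcup x=x[y]$. A head-on attack through \eqref{eq:absorb2} rewrites $x[y]\sqcup x$ as $x[x[y]]\sqcup x[y]$ and leaves the stubborn doubly-updated term $x[x[y]]$, which resists the basic rewrite rules; one cannot argue $x[y]\sqcup x=x[y]$ from $x\sqcup x[y]=x$ alone, since the reverse domain containment genuinely needs the update structure. The way around this is to avoid producing that term at all: apply the \emph{already-derived} law \eqref{eq:domaineq2} not to $x$ but to the compound term $x[y]$, that is, instantiate the scheme $u\sqcup u[v]=u$ at $u:=x[y]$, $v:=x$ to obtain $x[y]\sqcup x[y][x]=x[y]$, and then collapse $x[y][x]$ to $x$ using the auxiliary identity. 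Recognising that \eqref{eq:domaineq1} should be extracted by self-applying \eqref{eq:domaineq2} to $x[y]$, rather than computed directly, is the key observation; everything else is routine rewriting.
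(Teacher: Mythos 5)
Your proof is correct: every step checks out against the axioms of Theorem~\ref{thm:overrideupdate}, and the overall strategy (direct equational rewriting) is the same as the paper's. The differences are in the details, and they are genuine improvements in economy. You organise everything around the explicit workhorse $x[y][x]\refe{eq:leftdist}x[x\sqcup y]\refe{eq:absorb}x$, which the paper only uses implicitly inside its derivation of \eqref{eq:domaineq2}. Your proof of \eqref{eq:agreement2} as the two-step chain $y\sqcup x\refe{eq:absorb2}x[y]\sqcup y=y\sqcup x[y]$ (via the already-established \eqref{eq:agreement1}) completely avoids \eqref{eq:rightdist}, whereas the paper's proof of \eqref{eq:agreement2} is a five-step chain that invokes \eqref{eq:rightdist}; this is consistent with, and gives a small piece of evidence for, the paper's Remark~\ref{rem:redundancies} that \eqref{eq:rightdist} is redundant. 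For \eqref{eq:domaineq1}, the paper takes exactly the ``head-on'' route you describe, resolving the stubborn term by proving $x[x[y]]=x[y]$ through a four-step chain and then applying \eqref{eq:absorb2}; your alternative of instantiating the derived scheme $u\sqcup u[v]=u$ at $u:=x[y]$, $v:=x$ and collapsing $x[y][x]$ to $x$ is shorter and, to my eye, more transparent. The net effect is that your proof establishes the lemma from \eqref{eq:absorb}, \eqref{eq:absorb2}, \eqref{eq:leftdist}, associativity and idempotence alone, a strictly smaller axiom set than the paper's proof uses, exactly as you predicted at the outset.
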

\begin{proof}
Idempotence and associativity of $\sqcup$ are immediate as they are included in Theorem \ref{thm:overrideupdate}. For $x\sqcup y \sqcup x=x\sqcup y$, observe that $(x\sqcup y)\sqcup x\refe{eq:absorb2}y[x]\sqcup x\sqcup x=y[x]\sqcup x\refe{eq:absorb2}x\sqcup y$.  Next, law \eqref{eq:domaineq2} follows by way of $x=x\sqcup x\refe{eq:absorb}x[x\sqcup y]\sqcup x\refe{eq:leftdist}x[y][x]\sqcup x\refe{eq:absorb2}x\sqcup x[y]$.  
For \eqref{eq:agreement2} use $x\sqcup y \refe{eq:absorb} (x\sqcup y)[(x\sqcup y)\sqcup y]\refe{eq:rightdist}x[(x\sqcup y)\sqcup y]\sqcup y[(x\sqcup y)\sqcup y]\refe{eq:absorb}x\sqcup y[(x\sqcup y)\sqcup y]\refe{eq:leftdist}x\sqcup y[y\sqcup y][x]\refe{eq:absorb}x\sqcup y[x]$.
For \eqref{eq:agreement1} we have $y\sqcup x[y]\refe{eq:absorb2}x[y][y]\sqcup y\refe{eq:leftdist}x[y\sqcup y]\sqcup y=x[y]\sqcup y$.
For \eqref{eq:domaineq1} note that $x[y]\refe{eq:absorb}x[y][x[y]\sqcup x]\refe{eq:leftdist}x[x[y]\sqcup x\sqcup y]\refe{eq:leftdist}x[x\sqcup y][x[y]]\refe{eq:absorb}x[x[y]]$.  Using this we obtain $x[y]\sqcup x\refe{eq:absorb2}x[x[y]]\sqcup x[y]=x[y]\sqcup x[y]=x[y]$, as required.
\end{proof}

%\refe{eq:leftdist}x[y\sqcup x]\refe{eq:absorb2}x[x[y]\sqcup y]=x[y]$

We also need the following consequences.
\begin{lem}\label{lem:secondlot}
The following laws are consequences of the axioms in Theorem \ref{thm:overrideupdate}.
\begin{align}
x[y][z]&=x[z][y[z]]\label{eq:updateright}\\
x[y]=x&\rightarrow y[x]=y\label{eq:switch}\\
x\sqcup z=z\And y\sqcup z=z&\rightarrow x[y]=x\label{eq:jump}\\
x\sqcup y=x\And y\sqcup x=y&\rightarrow x[u][y]=y\label{eq:domainequality}\\
x\sqcup y=x\And y\sqcup x=y\And v\sqcup x=v\sqcup y&\rightarrow u[x][v[y][w]]=u[y][v[y][w]]\label{eq:samedom}
\end{align}
\end{lem}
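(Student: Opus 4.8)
The plan is to derive each of the five laws in turn, relying entirely on the toolkit assembled in Lemma~\ref{lem:firstlot} (that is, \eqref{eq:domaineq1}--\eqref{eq:agreement2}, idempotence, associativity and $x\sqcup y\sqcup x=x\sqcup y$) together with the left-distributive law \eqref{eq:leftdist} and the absorption laws \eqref{eq:absorb},~\eqref{eq:absorb2}. Somewhat surprisingly, the new law \eqref{eq:special} and its companion \eqref{eq:special2} will not be needed here; they are reserved for the subsequent induction on the $\lambda_n$. The recurring device is that \eqref{eq:leftdist} rewrites a double update $a[b][c]$ as the single update $a[c\sqcup b]$, converting nested updates into $\sqcup$-expressions that the domain-bookkeeping laws of Lemma~\ref{lem:firstlot} can then manipulate.

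For \eqref{eq:updateright} I would apply \eqref{eq:leftdist} to both sides, turning the left-hand side into $x[z\sqcup y]$ and the right-hand side into $x[(y[z])\sqcup z]$, so that it suffices to prove $(y[z])\sqcup z=z\sqcup y$; this is $(y[z])\sqcup z\refe{eq:agreement1}z\sqcup y[z]\refe{eq:agreement2}z\sqcup y$. For \eqref{eq:switch}, the first step is to observe that the hypothesis $x[y]=x$ forces $x$ and $y$ to commute under $\sqcup$: indeed $y\sqcup x\refe{eq:absorb2}x[y]\sqcup y=x\sqcup y$. Feeding this into \eqref{eq:absorb} gives $y=y[y\sqcup x]=y[x\sqcup y]$, and one application of \eqref{eq:leftdist} (using $y[y]=y$, itself immediate from \eqref{eq:absorb}) rewrites $y[x\sqcup y]=y[y][x]=y[x]$, yielding $y[x]=y$. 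The law \eqref{eq:jump} runs along the same lines: from $x\sqcup z=z$ and \eqref{eq:absorb} we get $x=x[z]$, whence $x[y]=x[z][y]\refe{eq:leftdist}x[y\sqcup z]=x[z]=x$, the penultimate step using the second hypothesis $y\sqcup z=z$.

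The one genuinely two-stage argument is \eqref{eq:domainequality}. I would first isolate the auxiliary fact that equal-domain elements are absorbed by update: $a\sqcup b=a$ and $b\sqcup a=b$ together imply $a[b]=b$. This holds because $a[b]\sqcup b\refe{eq:agreement1}b\sqcup a[b]\refe{eq:agreement2}b\sqcup a=b$, while on the other hand $a[b]\refe{eq:domaineq1}a[b]\sqcup a=a[b]\sqcup(a\sqcup b)=a[b]\sqcup b$; comparing the two computations of $a[b]\sqcup b$ gives $a[b]=b$. To deduce \eqref{eq:domainequality} it then remains only to verify that $a=x[u]$ and $b=y$ satisfy the equal-domain hypotheses: $x[u]\sqcup y=(x[u]\sqcup x)\sqcup y=x[u]\sqcup(x\sqcup y)=x[u]\sqcup x=x[u]$ via \eqref{eq:domaineq1} and $x\sqcup y=x$, and symmetrically $y\sqcup x[u]=(y\sqcup x)\sqcup x[u]=y\sqcup(x\sqcup x[u])=y\sqcup x=y$ via \eqref{eq:domaineq2} and $y\sqcup x=y$; the auxiliary fact then delivers $x[u][y]=y$.

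Finally, for \eqref{eq:samedom} the key realisation is that, on writing $t:=v[y][w]$, the law \eqref{eq:leftdist} collapses both sides to a single update, $u[x][t]=u[t\sqcup x]$ and $u[y][t]=u[t\sqcup y]$, so it is enough to prove $t\sqcup x=t\sqcup y$. Since $t$ is an iterated update of $v$, two applications of \eqref{eq:domaineq1} show $t$ has the domain of $v$, i.e. $t\sqcup v=t$; then $t\sqcup x=(t\sqcup v)\sqcup x=t\sqcup(v\sqcup x)=t\sqcup(v\sqcup y)=(t\sqcup v)\sqcup y=t\sqcup y$, where the middle equality is exactly the hypothesis $v\sqcup x=v\sqcup y$. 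I expect the main obstacle of the whole lemma to be nothing deeper than the auxiliary equal-domain fact inside \eqref{eq:domainequality}: one must resist proving $a[b]=b$ directly and instead play the two one-sided evaluations of $a[b]\sqcup b$ against each other, since $\sqcup$ on its own cannot see past the domain of $a[b]$. I also note that the derivation of \eqref{eq:samedom} uses neither \eqref{eq:special} nor the equal-domain hypotheses on $x,y$, which are presumably present only because of the form in which \eqref{eq:samedom} is later invoked.
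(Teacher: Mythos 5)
Your proof is correct, and for \eqref{eq:updateright}, \eqref{eq:jump} and \eqref{eq:samedom} it is essentially the paper's argument (the paper proves \eqref{eq:updateright} by the single chain $x[y][z]=x[z\sqcup y]=x[y[z]\sqcup z]=x[z][y[z]]$ using \eqref{eq:absorb2} directly where you route through \eqref{eq:agreement1} and \eqref{eq:agreement2}, and for \eqref{eq:samedom} it phrases your observation $t\sqcup v=t$ as ``$v[z]\sqcup x=v[z]\sqcup y$ for any $z$''). The two places you genuinely diverge are \eqref{eq:switch} and \eqref{eq:domainequality}. For \eqref{eq:switch} the paper first proves \eqref{eq:updateright} and then computes $y=y[y\sqcup x]=y[x][y]=y[y][x[y]]=y[x]$, whereas you instead extract commutativity $x\sqcup y=y\sqcup x$ from the hypothesis and finish with \eqref{eq:leftdist} alone; both work, and yours makes the derivations more independent of one another. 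For \eqref{eq:domainequality} the paper gives one direct nine-step chain $y=y\sqcup x=x[y]\sqcup y=\dots=x[u][y]$ using \eqref{eq:rightdist}, while you factor out the auxiliary fact that $a\sqcup b=a$ and $b\sqcup a=b$ imply $a[b]=b$ and then check that $a:=x[u]$, $b:=y$ satisfy its hypotheses; this is a cleaner decomposition that isolates the reusable ``equal domains forces absorption under update'' principle, at the cost of needing \eqref{eq:domaineq1} and \eqref{eq:domaineq2} from Lemma~\ref{lem:firstlot} rather than \eqref{eq:rightdist}. Your closing observations --- that neither \eqref{eq:special} nor \eqref{eq:special2} is needed in this lemma, and that the first two conjuncts of the premise of \eqref{eq:samedom} are not actually used --- are both accurate and consistent with the paper's proof.
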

\begin{proof}
For Equation \eqref{eq:updateright} we have
\[
x[y][z]\refe{eq:leftdist}x[z\sqcup y]\refe{eq:absorb2}x[y[z]\sqcup z]\refe{eq:leftdist}x[z][y[z]].
\]
For \eqref{eq:switch}, assume $x[y]=x$ and use $y\refe{eq:absorb}y[y\sqcup x]\refe{eq:leftdist}y[x][y]\refe{eq:updateright}y[y][x[y]]=y[x]$ with the last step using $x[y]=x$ and $y[y]=y[y\sqcup y]\refe{eq:absorb}y$.
Now for \eqref{eq:domainequality}, assume
\begin{equation}
x\sqcup y=x\And y\sqcup x=y\tag{$*$}\label{eq:assumption}
\end{equation}
and obtain $y\refe{eq:assumption}y\sqcup x\refe{eq:absorb2} x[y] \sqcup y=x[y] \sqcup y[y]\refe{eq:rightdist}(x\sqcup y)[y]\refe{eq:assumption}x[y]\refe{eq:absorb}x[x\sqcup u][y]\refe{eq:leftdist}x[y\sqcup x\sqcup u]\refe{eq:assumption}x[y\sqcup u]\refe{eq:leftdist}x[u][y]$.

For \eqref{eq:jump} assume that $x,y,z$ satisfy the premise of \eqref{eq:jump}.  Then $x[z]=x[x\sqcup z]\refe{eq:absorb}x$ so that $x[y]=x[z][y]\refe{eq:leftdist}x[y\sqcup z]=x[z]=x$.
  
Finally, we prove \eqref{eq:samedom}.  Note that the premise of the implication easily leads to $v[z] \sqcup x = v[z] \sqcup y$ for any $z$.  Applying \eqref{eq:leftdist} twice gives $u[x][v[y][w]]=u[v[w\sqcup y]\sqcup x]$, and as $v[z] \sqcup x = v[z] \sqcup y$ for any $z$, we have $u[v[w\sqcup y]\sqcup x]=u[v[w\sqcup y]\sqcup y]\refe{eq:leftdist}u[y][v[y][w]]$ as required.
\end{proof}

We need further preliminary lemmas.  
\begin{lem}
The following two implication schema are consequences of the axioms in Theorem \ref{thm:overrideupdate}\textup:
\[
\bigand_{1\leq i\leq n-1} x_i[x_{i+1}]=x_i\rightarrow x_1[x_2[x_3[\dots [x_n[u]]\dots]]]=x_1[x_2[x_3[\dots [x_n[x_1[u]]]\dots]]]\tag{$\eta_{n}$}
\]
and 
\begin{multline*}
\bigand_{1\leq i\leq n-1} x_i[x_{i+1}]=x_i\\
\rightarrow x_1[x_2[x_3[\dots [x_n[u]]\dots]]]=x_1[(x_2[x_1])[(x_3[x_1])[\dots [(x_n[x_1])[u]]\dots]]]\tag{$\eta_n'$}
\end{multline*}
\end{lem}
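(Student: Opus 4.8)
The plan is to prove both schemas simultaneously by induction on $n$, in a leapfrogging pattern $\dots\Rightarrow\eta_n\Rightarrow\eta_n'\Rightarrow\eta_{n+1}\Rightarrow\dots$, using the two bounded special laws \eqref{eq:special} and \eqref{eq:special2} together with the conditional consequences of Lemmas \ref{lem:firstlot} and \ref{lem:secondlot} to drive the inductive step. The guiding principle throughout is the semantic observation that the hypotheses $x_i[x_{i+1}]=x_i$ (equivalently $x_{i+1}[x_i]=x_{i+1}$, by \eqref{eq:switch}) force $x_1,\dots,x_n$ to agree on the intersection of their domains: once the nesting has descended into all of $\dom(x_2),\dots,\dom(x_n)$, inserting or deleting an innermost $x_1$ is invisible because the value there already coincides with that of $x_1$. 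The inductive machinery is a way of making this confinement argument algebraic.

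For the base cases, $\eta_1$ is just the update-idempotence law $x_1[x_1[u]]=x_1[u]$ established in the course of Lemma \ref{lem:firstlot}, and $\eta_1'$ is the trivial identity. For $\eta_2$ I would apply \eqref{eq:special} to rewrite $x_1[x_2[x_1[u]]]$ as $x_1[x_2[x_1][u]]$ and then use $x_2[x_1]=x_2$ (from the hypothesis via \eqref{eq:switch}) to collapse $x_2[x_1][u]$ to $x_2[u]$; the law $\eta_2'$ then reduces to $\eta_2$ after noting that its leading correction $x_2[x_1]$ is vacuous.

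The key reduction in the inductive step is that $\eta_n'$ lets one replace each interior function $x_i$ (for $i\ge 2$) by its $x_1$-correction $x_i[x_1]$. Applying $\eta_n'$ with the innermost argument instantiated first as $x_{n+1}[u]$ and then as $x_{n+1}[x_1[u]]$ turns the two sides of $\eta_{n+1}$ into the single outstanding identity
\[
x_1[(x_2[x_1])[\cdots[(x_n[x_1])[x_{n+1}[u]]]]]=x_1[(x_2[x_1])[\cdots[(x_n[x_1])[x_{n+1}[x_1[u]]]]]],
\]
an innermost $x_1$-swap performed inside a chain all of whose interior entries are already $x_1$-corrected. In this corrected form the relevant tools become available: the absorption identity $(x_i[x_1])[x_1[W]]=x_i[x_1[W]]$ (immediate from \eqref{eq:leftdist} and \eqref{eq:domaineq1}) lets an $x_1$ propagate upward one level at a time, \eqref{eq:special} and \eqref{eq:special2} handle the two- and three-level rearrangements that arise, and the conditional laws of Lemma~\ref{lem:secondlot} (in particular \eqref{eq:samedom}, together with \eqref{eq:switch}) discharge the final value-matching, which is exactly the premise-guaranteed agreement $x_1=\dots=x_{n+1}$ on the common domain. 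The passage from $\eta_n$ to $\eta_n'$ is handled by the companion ``recolouring'' identity, in which the interior corrections are shifted from $x_2$ to $x_1$ inside the context $x_1[x_2[\,\cdot\,]]$, justified in the same way, since that context confines everything below it to $\dom(x_1)\cap\dom(x_2)$, where $x_1$ and $x_2$ agree.

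I expect the main obstacle to be precisely the depth-bridging: the two occurrences of $x_1$ (the outer one and the freshly inserted innermost one) are separated by the full depth of the nesting, yet the only unconditional rearrangement laws available, \eqref{eq:special} and \eqref{eq:special2}, reach across merely two and three levels. The whole point of introducing the corrected-chain reformulation $\eta_n'$, and of carrying it in tandem with $\eta_n$, is to convert an apparently depth-$n$ manipulation into a telescoping sequence of bounded-depth moves powered by the absorption identity and the premise-driven agreements; verifying that these moves compose correctly for arbitrary $n$ --- rather than just the small cases checkable by {\em Prover9} --- is the technical heart of the argument.
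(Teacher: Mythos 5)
Your base cases and your semantic guiding principle are right, and your first move in the inductive step --- applying $\eta_n'$ to both sides of $\eta_{n+1}$ so as to reduce it to the ``corrected-chain'' identity
\[
x_1[(x_2[x_1])[\cdots[(x_n[x_1])[x_{n+1}[u]]]\cdots]]=x_1[(x_2[x_1])[\cdots[(x_n[x_1])[x_{n+1}[x_1[u]]]]\cdots]]
\]
--- is a legitimate deduction. But proving that identity is exactly where the whole difficulty lives, and your proposal does not actually supply the proof. The absorption identity $(x_i[x_1])[x_1[W]]=x_i[x_1[W]]$ only fires when the \emph{immediate} argument of $x_i[x_1]$ has the form $x_1[W]$; in the corrected chain the immediate argument of each $x_i[x_1]$ is $(x_{i+1}[x_1])[\cdots]$ (or $x_{n+1}[\cdots]$ at the bottom), so there is nothing for it to absorb until the innermost $x_1$ has already been commuted up past $x_{n+1},x_n[x_1],\dots$ --- and that commutation is an unbounded-depth manipulation of precisely the kind the lemma is trying to establish. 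The remaining tools you cite (\eqref{eq:special}, \eqref{eq:special2}, \eqref{eq:samedom}, \eqref{eq:switch}) all act at fixed depth or require syntactic patterns (an outer context literally equal to $x_1[\cdot]$ two or three levels above the swap) that the corrected chain does not present. Note also that the corrected chain still has length $n+1$, so your reduction has not decreased the induction parameter; you would need a further induction, with its own bridging mechanism, to finish.

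The paper's inductive step avoids this by a different device: to prove $\eta_{k+1}$ it applies the hypothesis $\eta_k$ to \emph{two overlapping length-$k$ subchains} of the length-$(k+1)$ chain --- first to $x_2,\dots,x_{k+1}$, inserting an innermost $x_2$ (rewritten as $x_2[x_1]$ via \eqref{eq:switch}), then to $x_1,\dots,x_k$ with innermost argument $x_{k+1}[x_2[x_1][u]]$, inserting an $x_1$ immediately outside $x_{k+1}$. This brings an $x_1$ and an $x_2$ within three levels of the innermost position, so the bounded-depth law \eqref{eq:special2} (with $w:=x_1$, $x:=x_{k+1}$, $y:=x_2$, $z:=u$) can perform the swap, after which the two insertions are undone by $\eta_k$ again. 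The family $\eta_n'$ is then derived \emph{afterwards} from $\eta_2,\dots,\eta_n$ together with repeated use of \eqref{eq:special}, rather than being used as the engine of the induction. To salvage your architecture you would need to replace the vague ``propagation'' step by some concrete overlapping-subchain argument of this kind; as written, the inductive step has a genuine gap.
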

\begin{proof}
We first prove the $\eta_n$ family, then the $\eta_n'$ family.  The proof is by induction on $n$.  The base case is straightforward using $x_2=x_2[x_1]$ (by \eqref{eq:switch}, from $x_1[x_2]=x_1$) as $x_1[x_2[u]]=x_1[x_2[x_1][u]]\refe{eq:special}x_1[x_2[x_1[u]]]$.  Now let $k\geq 2$ and assume $\eta_k$ is true.   Assume $x_1,\dots,x_{k+1}$ satisfy the premise of $\eta_{k+1}$.  

Then 
\begin{align*}
x_1[x_2[x_3[\dots [x_{k+1}[u]]\dots]]]=&x_1[x_2[x_3[\dots [x_{k}[x_{k+1}[x_2[u]]]]\dots]]]\text{ by $\eta_k$, starting at $x_2$}\\
=&x_1[x_2[x_3[\dots x_{k}[x_{k+1}[x_2[x_1][u]]]\dots]]]\text{ by $x_2=x_2[x_1]$}\\
=&x_1[x_2[x_3[\dots x_{k}[x_1[x_{k+1}[x_2[x_1][u]]]]\dots]]]\text{  by $\eta_k$ starting at $x_1$}\\
\refe{eq:special2}&x_1[x_2[x_3[\dots x_{k}[x_1[x_{k+1}[x_2[x_1[u]]]]]\dots]]]\\
=&x_1[x_2[x_3[\dots x_{k}[x_{k+1}[x_2[x_1[u]]]]\dots]]]\text{ by $\eta_k$ starting at $x_1$}\\
=&x_1[x_2[x_3[\dots x_{k}[x_{k+1}[x_1[u]]]\dots]]]\text{ by $\eta_k$ starting at $x_2$.}
\end{align*}
 
 Now we look at the $\eta_n'$ family.  Let $n\geq 2$ be an arbitrary integer and assume that $x_1,\dots,x_n$ satisfy the premise of $\eta_n'$.  We have
\begin{align*}
x_1[x_2[x_3[\dots x_{n-1}[x_n[u]]\dots]]]&=x_1[x_2[x_3[\dots x_{n-1}[x_n[x_1[u]]]\dots]]]\text{ by $\eta_n$}\\
&=x_1[x_2[x_3[\dots x_{n-1}[x_1[x_n[x_1[u]]]]\dots]]]\text{ by $\eta_{n-1}$}\\
&\dots=x_1[x_2[x_1[x_3[x_1[\dots x_{n-1}[x_1[x_n[x_1[u]]]]\dots]]]]]\text{ by $\eta_{2}$}\\
&\refe{eq:special}x_1[x_2[x_1[x_3[x_1[\dots x_{n-1}[x_1[x_n[x_1][u]]]\dots]]]]]\\
&\dots\refe{eq:special}x_1[x_2[x_1][x_3[x_1][\dots x_{n-1}[x_1][x_n[x_1][u]]]\dots]]]
\end{align*}
This completes the proof of $\eta_n'$, and hence of the lemma.
\end{proof}

\begin{proof}[Proof of Theorem \ref{thm:overrideupdate}]
It remains to show that the laws $\lambda_n$ are a consequence of the axioms in Theorem \ref{thm:overrideupdate}.  
We will assume that $x,y,z_1,\dots,z_{2n+1}$ satisfy the premise of $\lambda_n$.  The law $\lambda_n$ refers only to $\sqcup$, however there are number of atomic formul{\ae} within the premise  that are more conveniently expressed in terms of $\underline{\phantom{a}}[\underline{\phantom{a}}]$.  First observe that the properties $z_{2i-1}\sqcup z_{2i}=z_{2i}$ and $z_{2i+1}\sqcup z_{2i}=z_{2i}$ imply $z_{2i-1}[z_{2i+1}]=z_{2i-1}$ by Equation \eqref{eq:jump} (and then $z_{2i+1}[z_{2i-1}]=z_{2i+1}$ by symmetry, or by  \eqref{eq:switch}).  It turns out we need only this simpler property (essentially: the expression $z_{2i-1}\sqcup z_{2i+1}$ could be shown to replace  the role of $z_{2i}$, though we do not need this fact in the proof).  Thus we need only the odd index $z$-variables, which we now label as $v_1:=z_1$, $v_2:=z_{3}$,\dots, $v_{n+1}=z_{2n+1}$, and so can now assume that for each $i$ we have $v_i\sqcup x=v_i\sqcup y$ as well as $v_i[v_{i+1}]=v_i$ and that $v_1\sqcup x=x$ and $v_{n+1}\sqcup y=y$.  Our goal is to show that $x=y$.  Note that \eqref{eq:domainequality} implies that $x[u][y]=y$ (for any $u$) and by symmetry $y[u][x]=x$.  Also, the property $v_1\sqcup x=x$ implies that $x\refe{eq:absorb}x[x]=x[v_1\sqcup x]\refe{eq:leftdist}x[x][v_1]=x[v_1]$, which by symmetry gives $y=y[v_{n+1}]$ and then by \eqref{eq:switch} gives $v_{n+1}[y]=v_{n+1}$.

Now we have
\begin{align*}
x&=x[v_1]=x[v_1[v_2]]=\cdots\\
=&x[v_1[v_2[\dots [v_{n-1}[v_n]]\dots]]]\text{ (using $x=x[v_1]$ and $v_i=v_i[v_{i+1}]$)}\\
=&x[v_1[x][v_2[x][\dots [v_{n}[x][v_{n+1}]]\dots]]]\\
&\qquad \qquad \text{(by $\eta_{n+1}'$, with $u:=v_{n+1}$, and with $x_1=x$, $x_2:=v_1$,\dots, $x_{n+1}:=v_{n}$)}\\
=&x[v_1[x][v_2[x][\dots [v_{n}[x][v_{n+1}[y]]]\dots]]]\text{ (as  $v_{n+1}=v_{n+1}[y]$)}\\
\refe{eq:samedom}&\cdots\refe{eq:samedom}x[y][v_1[y][v_2[y][\dots [v_{n}[y][v_{n+1}[y]]]\dots]]]\\
\refe{eq:updateright}&x[y][v_1[y][v_2[y][\dots [v_{n-1}[y][v_{n}[v_{n+1}]][y]]\dots]]]\\
\refe{eq:updateright}&\cdots\refe{eq:updateright}(x[v_1[v_2[\dots [v_{n-1}[v_{n+1}]]\dots]]])[y]\refe{eq:domainequality}y,
\end{align*}
as required.  
\end{proof}

\section{Choice functions}\label{sec:update}  
We now return to the unproved claims in Section \ref{sec:basics}.  We begin by proving Theorem \ref{thm:venn}, which gives a canonical representation of the free algebras for the class $\operatorname{Rep}(\tau)$.  
\begin{proof}[Proof of Theorem \ref{thm:venn}]
By Theorem \ref{thm:3tau} we may assume that $F_\tau(S)$ is a subalgebra of $\mathcal{P}(A,B)$ for some sets $A,B$.  Recall that $S$ denotes the free generators, which then are functions from $A$ to $B$.  Let $\bar{S}=\{\bar{s}\mid s\in S\}$ be a disjoint copy of $S$ and let each $\bar{s}$ denote a function from $A$ to $B\times S$ defined by $a\mapsto (s(a),s)$ for each $a$ in the domain of $s$.  Let $F'_\tau(S)$ denote the subalgebra of $\mathcal{P}(A,B\times S)$ generated by $\bar{S}$.  Using the first projection from $B\times S$, Lemma \ref{lem:functionquotient}(1) shows that $F'_\tau(S)$ maps homomorphically onto $F_\tau(S)$.  This homomorphism maps $\bar{s}\mapsto s$ and so is bijective from the generators $\bar{S}$ to the free generators $S$.  By the universal mapping property for $F_\tau(S)$, it follows that $F'_\tau(S)$ is isomorphic to $F_\tau(S)$, with free generators $\bar{S}$.

Now consider the quotient of $B\times S$ corresponding to the second projection.  This induces a proper quotient of $\mathcal{P}(A,B\times S)$, however it separates the elements of $F'_S$.  To see this, first note that the natural map described in Lemma \ref{lem:functionquotient}(1) does not change the domain of any  function, only the range.  Second, note that two distinct elements $f\neq g$ of $F'_S$ with the same domain must differ at some element $a\in A$.  By a trivial induction argument on applications of operations from $\{\sqcup,\mathbin{@},-,\underline{\phantom{a}}[\underline{\phantom{a}}]\}$ on elements from $\bar{S}$, it follows that there are $s_f,s_g\in S$ with $f(a)=\bar{s}_f(a)$ and $g(a)=\bar{s}_g(a)$.  But $\bar{s}_f(a)=(s_f(a),s_f)$ and $\bar{s}_g(a)=(s_g(a),s_g)$.  Because $f(a)\neq g(a)$ we have $s_f\neq s_g$ and then the second projection from $B\times S$ to $S$ separates $f(a)=(s_f(a),s_f)\mapsto s_f$ from $g(a)=(s_g(a),s_g)\mapsto s_g$, as required.  It now follows that $F_\tau(S)$ is isomorphic to the subalgebra of $\mathcal{P}(A,S)$ with each free generator $s\in S$ corresponding to a  function from $A$ that maps its domain constantly to $s$.  To avoid cumbersome notation, we now identify  $F_\tau(S)$ with this subalgebra of $\mathcal{P}(A,S)$.

Define an equivalence relation $\equiv$ on $A$ by stating $x\equiv y$ if for all $s\in S$ either both $x,y\in \dom(s)$ or $x,y\notin \dom(s)$.  It is clear that on each block of this equivalence, each element of $F_\tau(S)$ acts identically: it is either completely undefined, or maps the entire block to some $s\in S$.  If we select a transversal $A'$ of $\equiv$ (that is, one element is selected from each block of $\equiv$) and then replace $A$ by the subset $A'$, then Lemma~\ref{lem:functionquotient}(2) shows that we have taken a quotient of $\mathcal{P}(A,S)$.  As each element of $F_\tau(S)$ is either entirely undefined or entirely constant on each block of $\equiv$, it follows that this quotient separates the elements of $F_\tau(S)$; without loss of generality then, we will assume that $A=A'$, or equivalently that $\equiv$ is the equality relation.  Now we may identify $A$ with a subset of the powerset $\wp(S)$: indeed, we may label each element $a$ of $A$ by the subset of elements of $S$ defined  at $a$.  The assumption that $\equiv$ is the equality relation implies that this labelling is an injective function from $A$ into $\wp(S)$.  Without loss of generality, we may identify $A$ with this subset of $\wp(S)$.  Note then that each free generator $s\in S$ is defined at precisely those elements of $A$ that contain  $s$.  This nearly completes the proof, except that $A$ might possibly only contain some of the subsets of $S$.  Applying Lemma \ref{lem:functionquotient}(2) again (with $X=\wp(S)$ and $X'=A$), we find that $F_\tau(S)$ is a homomorphic image of the subalgebra of $\mathcal{P}(\wp(S),S)$ described in the theorem statement, with generators mapping bijectively to the free generators of $F_\tau(S)$.  The desired conclusion now follows by the universal mapping property for $F_\tau(S)$.
\end{proof}

As mentioned in Section \ref{sec:basics}, Berendsen et al.~\cite[\S6]{minusover} prove a completeness result for the operations of $\{-,\sqcup\}$ with respect to the patterned Venn diagram interpretation (over finitely many variables): any legitimate patterning of any set of regions can be achieved using $\{-,\sqcup\}$; see Proposition~14 of~\cite{minusover}.  Under the canonical representation for the $\Rep(\{-,\sqcup\})$-free algebras (given in Theorem \ref{thm:venn}), this means that $F_{\{-,\sqcup\}}(S)$ coincides with $\Choice(S)$ as a $\{-,\sqcup\}$-algebra of functions.  The update operation in contrast appears quite weak, as it is incapable of even changing the domain of a function.  We now show that in any other sense it is the most expressive of the operations discussed above, since by using update alone we may achieve any legitimate patterning of the domain of a function.  More formally: under the canonical representation given by Theorem \ref{thm:venn}, we show that when   $S$ is finite, $F_{\underline{\phantom{a}}[\underline{\phantom{a}}]}(S)$ equals the family of all choice functions with domains $\{A\subseteq S\mid s\in A\}$, for $s\in S$.    It will be convenient to denote a domain of this form by $D_s$, or $D_i$ in the case that the free generator is indexed as $s_i$.

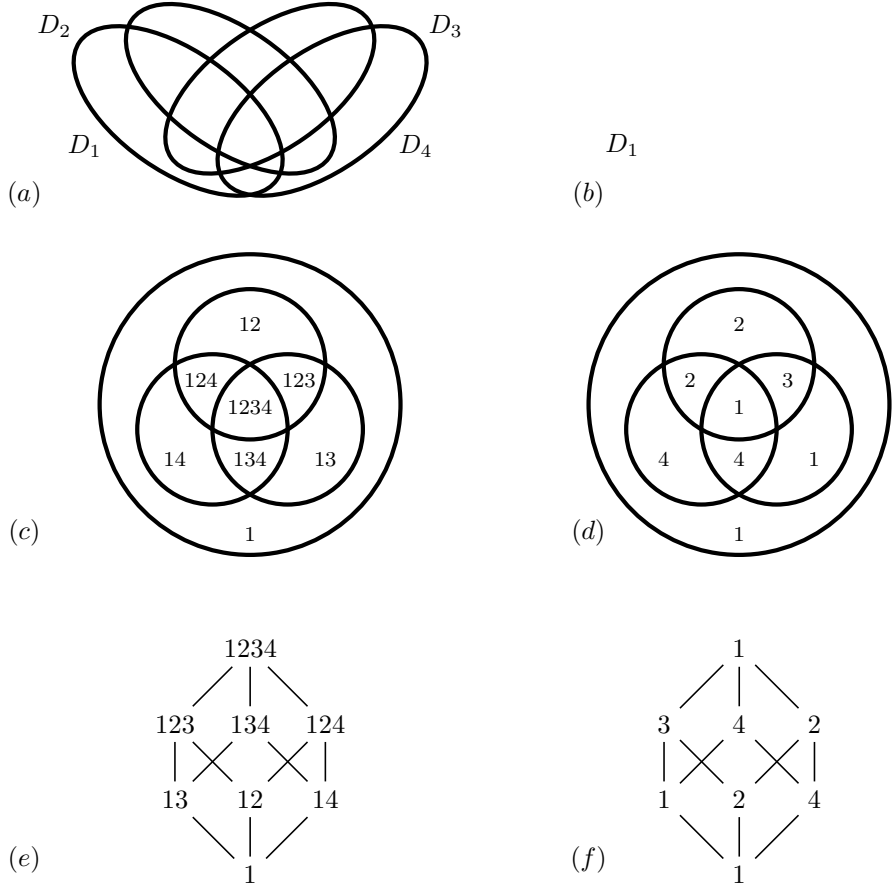
\begin{figure}
\begin{tikzpicture}
\node at (0,8) {
\begin{tikzpicture}[scale=0.8]
\draw [white] (-3.5,-.8) rectangle (3.5,3.2);
\draw [ultra thick, rotate = 35] (1.6,0.2) ellipse (2cm and 1cm);
\draw [ultra thick, rotate = 35] (1.1,1) ellipse (2cm and 1cm);
\draw [ultra thick, rotate = -35] (-1.6,0.2) ellipse (2cm and 1cm);
\draw [ultra thick, rotate = -35] (-1.1,1) ellipse (2cm and 1cm);
\node at (-2.75,0.5) {$D_1$};
\node at (-3.25,2.5) {$D_2$};
\node at (2.75,0.5) {$D_4$};
\node at (3.25,2.5) {$D_3$};
%\node at (-3,-.5) {$(a)$};
\end{tikzpicture}
};
\node at (6.5,8) {\begin{tikzpicture}[scale=0.8]
\draw  [white] (-3.5,-.8) rectangle (2,3.2);
\rotatebox{-35}{\scope
\clip
 (-1.6,0.2) ellipse (2.03cm and 1.03cm);
\draw [ultra thick, rotate = 70] (1.6,0.2) ellipse (2cm and 1cm);
\draw [ultra thick, rotate = 70] (1.1,1) ellipse (2cm and 1cm);
\draw [ultra thick, rotate = 0] (-1.6,0.2) ellipse (2cm and 1cm);
\draw [ultra thick, rotate = 0] (-1.1,1) ellipse (2cm and 1cm);
\endscope}
\node at (-2.75,0.5) {$D_1$};
%\node at (-3,-.5) {$(b)$};
\end{tikzpicture}};
\node at (0,4) {\begin{tikzpicture}
\draw [ultra thick] (0,0) circle [radius = 1];
\draw [ultra thick] (1,0) circle [radius = 1];
\draw [ultra thick] (60:1) circle [radius = 1];
\draw [ultra thick] (0.5,0.33) circle [radius = 2];
\node at (0.5,-1.4) {\footnotesize $1$};
\node at (0.5,1.4) {\footnotesize $12$};
\node at (1.5,-.4) {\footnotesize $13$};
\node at (.5,-.4) {\footnotesize $134$};
\node at (-.5,-.4) {\footnotesize $14$};
\node at (-.15,0.65) {\footnotesize $124$};
\node at (1.15,0.65) {\footnotesize $123$};
\node at (.5,0.3) {\footnotesize $1234$};
%\node at (-1.8,-1.5) {$(d)$};
\end{tikzpicture}};
\node at (6.5,4) {\begin{tikzpicture}
\draw [ultra thick] (0,0) circle [radius = 1];
\draw [ultra thick] (1,0) circle [radius = 1];
\draw [ultra thick] (60:1) circle [radius = 1];
\draw [ultra thick] (0.5,0.33) circle [radius = 2];
\node at (0.5,-1.4) {\footnotesize $1$};
\node at (0.5,1.4) {\footnotesize $2$};
\node at (1.5,-.4) {\footnotesize $1$};
\node at (.5,-.4) {\footnotesize $4$};
\node at (-.5,-.4) {\footnotesize $4$};
\node at (-.15,0.65) {\footnotesize $2$};
\node at (1.15,0.65) {\footnotesize $3$};
\node at (.5,0.3) {\footnotesize $1$};
%\node at (-1.8,-1.5) {$(e)$};
\end{tikzpicture}};
\node at (0,-.75) {\begin{tikzpicture}
\foreach \name/\x/\y/\pos  in {1234/2/3/above,123/1/2/left,134/2/2/right, 124/3/2/right, 12/2/1/below, 13/1/1/below, 14/3/1/below,1/2/0/below}
 		\node (\name) at (\x,\y) {\name};   	
 	 \foreach \src/\dest  in {1234/123,1234/124,1234/134,123/12,123/13,124/12,124/14,134/13,134/14,12/1,13/1,14/1} {\path (\src) edge[semithick]  (\dest); }
\end{tikzpicture}
};
\node at (6.5,-.75) {
\begin{tikzpicture}
\foreach \nodename/\name/\x/\y/\pos  in {1234/1/2/3/above,123/3/1/2/left,134/4/2/2/right, 124/2/3/2/right, 12/2/2/1/below, 13/1/1/1/below, 14/4/3/1/below,1/1/2/0/below}
 \node (\nodename) at (\x,\y) {\name};   	
 	 \foreach \src/\dest  in {1234/123,1234/124,1234/134,123/12,123/13,124/12,124/14,134/13,134/14,12/1,13/1,14/1} {\path (\src) edge[semithick]  (\dest); }
\end{tikzpicture}};

\node at (-3,6.8) {$(a)$};
\node at (4.5,6.8) {$(b)$};
\node at (-3,2.3) {$(c)$};
\node at (4.5,2.3) {$(d)$};
\node at (-3,-2.05) {$(e)$};
\node at (4.5,-2.05) {$(f)$};
\end{tikzpicture} 
\caption{(a): A Venn diagram of domains $D_1,D_2,D_3,D_4$ corresponding to each free generator $s_1,s_2,s_3,s_4$.  (b): An update term with first variable $x_1$ corresponds to a choice function whose domain consists of just one region: $D_1$.  (c): The domain $D_1$ redrawn, with each subregion labelled by the list of indices of free-generators that are defined there.  (d): An example choice function on the regions. (e): The domain $D_1$ as the ordered set of all subsets of $\{1,2,3,4\}$ containing $\{1\}$, corresponding to the diagram in (c).  (f): The choice function on this family of sets, corresponding to~(d).}\label{fig:venn4}
\end{figure}

We fix finitely many free generators $s_1,\dots,s_k$.  To avoid cumbersome notation in indexing we use the set $S=\{1,\dots,k\}$ in place of $\{s_1,\dots,s_k\}$ and represent each free generator $s_i$ as the (unique) constant choice function on domain $D_i=\{A\subseteq \{1,\dots,k\}\mid i\in A\}$.  
If $t(x_1,\dots,x_k)$ denotes a term in the language 
$\{\underline{\phantom{a}}[\underline{\phantom{a}}]\}$, with left-most variable $x_p$, say, then $t(s_1,\dots,s_k)$ determines a choice function on $D_p$: on each $A=\{i_1,\dots,i_m\}$ containing $p$, the function $t(s_1,\dots,s_k)$ must take one of the values $j\in \{i_1,\dots,i_m\}$ (in agreement with a free generator $s_{j}$).    Note that the Venn-diagrammatic representation of the set $\{i_1,\dots,i_m\}$ is the subregion $\left(\bigcap_{j\in A}x_j\right)\cap \overline{\left(\bigcup_{j\notin A}x_j\right)}$ (where overline denotes complementation); see Figure \ref{fig:venn4} for an example with $k=4$.  A term corresponding to the choice function in this figure is given after the proof of Theorem \ref{thm:VennComplete}, as an example of the inductive process used in the proof.

\begin{theorem}\label{thm:VennComplete}
Let $s_1,\dots,s_k$ be the free generators for $F_{\{\underline{\phantom{a}}[\underline{\phantom{a}}]\}}(S)$ in its canonical representation in $\Choice(\{1,\dots,k\})$\up; thus each $s_i$ is the constant choice function on domain $D_i$.
Then for  $p\in\{1,\dots,k\}$ and any choice function~$\gamma$ with domain $D_p$, there is a term $t(x_1,\dots,x_k)$ in the language $\{\underline{\phantom{a}}[\underline{\phantom{a}}]\}$ with $t(s_1,\dots,s_k)=\gamma$.
\end{theorem}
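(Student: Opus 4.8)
The plan is to recast the problem entirely in terms of choice functions, extract a single structural obstruction, and then prove a strengthened statement by induction. First I would record the semantics of a pure $\underline{\phantom{a}}[\underline{\phantom{a}}]$-term: since $\underline{\phantom{a}}[\underline{\phantom{a}}]$ preserves the domain of its left argument and $\sqcup$ never appears, a term $t$ with left-most variable $x_q$ evaluates under $x_i\mapsto s_i$ to a choice function with domain $D_q$, and for a region $A\ni q$ one has $(f[g])(A)=g(A)$ when the left-most variable of $g$ lies in $A$, and $(f[g])(A)=f(A)$ otherwise. The one fact to draw out is an obstruction: the domain of any such term is the principal filter $D_q=\{A\mid q\in A\}$, so no term is ever defined on a single non-top region of $D_p$ in isolation. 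Consequently one cannot hope to fix the value of $\gamma$ region by region independently, and this is exactly what forces the shape of the induction.

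To get around the obstruction I would prove the following strengthening, of which the theorem is the case $G=\{1,\dots,k\}$, $T=\{p\}$, $q=p$, $\delta=\gamma$: for every finite $G$, every nonempty $T\subseteq G$, every \emph{chosen} $q\in T$, and every assignment $\delta$ with $\delta(A)\in A$ for all $A$ satisfying $T\subseteq A\subseteq G$, there is an update term $u$, using only variables indexed by $G$ and having left-most variable $x_q$, whose associated choice function takes the value $\delta(A)$ at each such $A$. Allowing the left-most variable to be prescribed is the decisive strengthening: it is precisely what lets me control the domains of subterms when recombining, and without it the induction does not close.

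The induction is on $m=|G|-|T|$. When $m=0$ the only region is $G$ itself, and $u:=x_q[x_{\delta(G)}]$ works, having left-most variable $x_q$ and value $\delta(G)$ at $G$. For $m>0$ I pick some $c\in G\setminus T$ and split the regions $\{A\mid T\subseteq A\subseteq G\}$ into those avoiding $c$ and those containing $c$. The family avoiding $c$ is $\{A\mid T\subseteq A\subseteq G\setminus\{c\}\}$, handled by the inductive hypothesis over the smaller ground set $G\setminus\{c\}$ (same $T$, same $q$), producing a term $u_0$ that does not mention $x_c$. The family containing $c$ is $\{A\mid T\cup\{c\}\subseteq A\subseteq G\}$, handled by the inductive hypothesis with the enlarged target set $T\cup\{c\}$ and, critically, with prescribed left-most variable $x_c$, producing a term $u_1$ whose domain is $D_c$. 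Both invocations reduce $m$ by one. I then set $u:=u_0[u_1]$, whose left-most variable is that of $u_0$, namely $x_q$. On regions avoiding $c$ the term $u_1$ is undefined (its domain $D_c$ misses them), so $u$ agrees with $u_0$ and hence with $\delta$; on regions containing $c$ the term $u_1$ is defined and equals $\delta$, so the outer update gives $u=u_1=\delta$. This verifies $u$ and closes the induction.

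The hard part, and the place where the set-up must be arranged carefully, is this recombination step: a single update $u_0[u_1]$ has to install the correct values on the $c$-regions without corrupting the already-correct values on the $c$-free regions. What makes the surgery clean is forcing the left-most variable of $u_1$ to be $c$, which confines $\dom(u_1)$ to $D_c$ and keeps it away from the $c$-free regions entirely. Recognising that one must generalise to arbitrary $(G,T,q)$ with a prescribed left-most variable, rather than attacking the bare theorem directly, is the key insight; once that is in place, the bookkeeping "domain $=$ principal filter of the left-most variable" does all the remaining work.
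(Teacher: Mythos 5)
Your proof is correct, but it takes a genuinely different route from the paper's. Both arguments turn on the same key fact --- the domain of a pure $\underline{\phantom{a}}[\underline{\phantom{a}}]$-term is the principal filter $D_q$ of its left-most variable, so prescribing that variable is what controls which regions an update overwrites --- but the inductive decompositions differ. The paper inducts \emph{downward on $|A|$} over upsets ${\uparrow}A$ of the full powerset: for each $A$ and each $j\in A$ it builds $t_{j\in A}$ agreeing with $\gamma$ on ${\uparrow}A$ via the covering decomposition ${\uparrow}A=\{A\}\cup{\uparrow}A_{j_1}\cup\dots\cup{\uparrow}A_{j_{i+1}}$, recombining with a \emph{multi-way} chain of updates $x_\ell[x_{\gamma(A)}][t_{j_1\in A_{j_1}}]\dots[t_{j_{i+1}\in A_{j_{i+1}}}]$, one per cover; the upsets ${\uparrow}A_{j_m}$ overlap, and consistency on the overlaps is guaranteed because every subterm already agrees with $\gamma$ there. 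You instead generalise to arbitrary intervals $[T,G]$ of the powerset and split on a \emph{single} element $c\in G\setminus T$, partitioning the interval into the two \emph{disjoint} pieces $[T,G\setminus\{c\}]$ and $[T\cup\{c\},G]$ and recombining with one update $u_0[u_1]$. Your version buys a cleaner correctness check (no overlapping cases to reconcile) and a tidier binary recursion; the paper's version stays inside the single parameter family ${\uparrow}A$ and produces the explicit normal form it then simplifies in the worked example. One point you should make explicit when writing this up: the induction hypothesis for $u_0$ "over the ground set $G\setminus\{c\}$" must be read as a statement about evaluation in the fixed ambient algebra $\Choice(\{1,\dots,k\})$, with the guarantee restricted to regions $A\subseteq G\setminus\{c\}$ --- the term $u_0$ is of course also defined on regions containing $c$, and your argument silently (and correctly) relies on those values being overwritten by $u_1$. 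As stated your parameterisation supports this reading, so the gap is purely expository.
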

\begin{proof}
We work slightly more generally, by taking any choice function $\gamma$ on the domain $\wp^+(\{1,\dots,k\})$ of all \emph{nonempty} subsets of $\{1,\dots,k\}$.   We aim to construct an update term that agrees with the restriction of $\gamma$ to $D_p$.   We work inductively down $L:=\wp^+(\{1,\dots,k\})$.   For each $A\in L$, let $\gamma|_A$ denote the restriction of $\gamma$ to the upset ${\uparrow}A$ of $A$ in $L$.   For each $j$ in such an $A$ we will construct a term $t_{j\in A}$ with leftmost variable $x_j$ and such that the choice function $t_{j\in A}(s_1,\dots,s_k)$ agrees with $\gamma|_A$ on the elements of the upset of $A$ in $L$. 

The base case of the induction will be the top element $\top:=\{1,\dots,k\}$.  For each $j\in\{1,\dots,k\}$ we let $t_{j\in\top}$ be the term $x_j[x_{\gamma(\top)}]$.  
The function $t_{j\in\top}(s_1,\dots,s_k)=s_j[s_{\gamma(\top)}]$ has domain $D_j$, and coincides with $s_{\gamma(\top)}$ on $\bigcap_{1\leq i\leq k}D_i=\{\top\}$.  Hence $t_{j\in\top}(s_1,\dots,s_k)$ agrees with $\gamma$ on $\top$.

Now assume that for some $i\geq 0$ and every subset $A\subseteq \{1,\dots,k\}$ with $|A|=k-i$, and every $j\in A$ we have defined a term $t_{j\in A}$ that agrees with $\gamma|_{A}$ on ${\uparrow}A$ and has leftmost variable $x_j$.
If $i=k-1$ we are done, as then the $(k-i)$-element subsets are the singleton subsets, and $t_{p\in \{p\}}(s_1,\dots,s_k)$ agrees with $\gamma$ on the upset of $\{p\}$ (that is, on $D_p$), which completes the proof.  So now assume $i<k-1$ and consider some set $A\subseteq \{1,\dots,k\}$ with $|A|=k-(i+1)\geq 1$.  For each $j\notin A$, the set $A_j:=A\cup\{j\}$ has size $k-i$, so the induction hypothesis gives a term $t_{j\in A_j}$ agreeing with $\gamma$ on
$\operatorname{\uparrow}(A\cup\{j\})$.  There are precisely $i+1$ such choices for $j\notin A$, and we will enumerate them as $j_1,\dots,j_{i+1}$.  The induction step is based over the observation  that 
\[
{\uparrow}A=\{A\}\cup {\uparrow}A_{j_1}\cup\dots\cup {\uparrow}A_{j_{i+1}}.
\]
For each $\ell\in A$, define
\[
t_{\ell\in A}=x_{\ell}[x_{\gamma(A)}][t_{j_1\in A_{j_1}}]\dots[t_{j_{i+1}\in A_{j_{i+1}}}].
\]
The function $t_{\ell\in A}(s_1,\dots,s_k)$ has domain $D_\ell$ and agrees with $s_{\gamma(A)}$ on the region $\bigcap_{j\in A}D_j\cap \overline{\bigcup_{j\notin A}D_j}=\{A\}$ (that is, $t_{\ell\in A}(s_1,\dots,s_k)$ agrees with $\gamma$ at the point $A$) and by the induction hypothesis determines the same choice function as $\gamma$ on each set in ${\uparrow}A_{j_1},\dots, {\uparrow}A_{j_{i+1}}$.  Thus $\gamma_{t_{\ell\in A}}(s_1,\dots,s_k)$ agrees with $\gamma_{A}$ on ${\uparrow}A$, as required.
\end{proof}
As example of the proof method, we work through the example choice function in Figure \ref{fig:venn4}$(d,f)$, which has domain $D_1$ corresponding to $x_1$.  We obtain:
\begin{itemize}
\item $t_{i\in\{1,2,3,4\}}=x_i[x_1]$;
\item $t_{2\in\{1,2,3\}}$ is $x_2[x_3][x_4[x_1]]$ and $t_{3\in\{1,2,3\}}$ is $x_3[x_3][x_4[x_1]]$;
\item $t_{2\in\{1,2,4\}}$ is $x_2[x_2][x_3[x_1]]$ and $t_{4\in\{1,2,4\}}$ is $x_4[x_2][x_3[x_1]]$;
\item $t_{3\in\{1,3,4\}}$ is $x_3[x_4][x_2[x_1]]$ and $t_{4\in\{1,3,4\}}$ is $x_4[x_4][x_2[x_1]]$;
\item $t_{2\in\{1,2\}}$ is $x_2[x_2][t_{3\in\{1,2,3\}}][t_{4\in\{1,2,4\}}]=x_2[x_2][x_3[x_3][x_4[x_1]]][x_4[x_2][x_3[x_1]]]$;
\item $t_{3\in\{1,3\}}$ is $x_3[x_1][t_{2\in\{1,2,3\}}][t_{4\in \{1,3,4\}}]=x_3[x_1][x_2[x_3][x_4[x_1]]][x_4[x_4][x_2[x_1]]]$;
\item $t_{4\in\{1,4\}}$ is $x_4[x_4][t_{2\in\{1,2,4\}}][t_{3\in\{1,3,4\}}]=
x_4[x_4][x_2[x_2][x_3[x_1]]][x_3[x_4][x_2[x_1]]]$;
\item $t_{1\in\{1\}}$ is $x_1[x_1][t_{2\in\{1,2\}}][t_{3\in\{1,3\}}][t_{4\in\{1,4\}}]$,
\end{itemize}
which is
\begin{multline*}
x_1[x_1][x_2[x_2][x_3[x_3][x_4[x_1]]][x_4[x_2][x_3[x_1]]]]\\
[x_3[x_1][x_2[x_3][x_4[x_1]]][x_4[x_4][x_2[x_1]]]]\\
[x_4[x_4][x_2[x_2][x_3[x_1]]][x_3[x_4][x_2[x_1]]]].
\end{multline*}
There are clearly redundancies in this construction argument, as all instances of $x_i[x_i]$ can be replaced by $x_i$, and the $t_{3\in\{1,3\}}$ term updates all further subsets of $\{1,2,3,4\}$ containing both $1$ and $3$, and subsequently  $t_{4\in\{1,4\}}$ updates all subsets containing both $1$ and $4$. Thus we need only the $x_2[x_2]=x_2$ part of $t_{2\in \{1,2\}}$ and the $x_3[x_1][t_{2\in\{1,2,3\}}]$ part of $t_{3\in \{1,3\}}$.  Then $t_{1\in\{1\}}$ simplifies to
\[
x_1[x_2]
[x_3[x_1][x_2[x_3][x_4[x_1]]]]
[x_4[x_2[x_3[x_1]]][x_3[x_4][x_2[x_1]]]].
\]

The number of choice functions on the upset of a singleton is easily seen to be $\prod_{1\leq i\leq n}i^{\binom{n}{i}}$, so that the following result is an immediate corollary.
\begin{cor}
The $n$-generated free algebra in the variety generated by ${\bf 3}_{\{\underline{\phantom{a}}[\underline{\phantom{a}}]\}}$ has 
\[
n\cdot \prod_{1\leq i\leq n}i^{\binom{n}{i}}
\]
elements.
\end{cor}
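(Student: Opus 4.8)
The plan is to read the cardinality directly off the structural description of the free algebra provided by Theorems~\ref{thm:venn} and~\ref{thm:VennComplete}, so that the only real work is to identify the underlying set precisely and then count. Working inside the canonical representation of the free algebra in $\Choice(\{1,\dots,n\})$, each generator $s_i$ is the constant choice function on $D_i$, and the update operation preserves domains. Hence a one-line induction on the structure of an update term $t$ shows that $t(s_1,\dots,s_n)$ has domain $D_p$, where $x_p$ is the leftmost variable of $t$; so every element of the free algebra is a choice function whose domain is one of $D_1,\dots,D_n$. Conversely, Theorem~\ref{thm:VennComplete} says that \emph{every} choice function with domain $D_p$ is realised by some such term. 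Thus, as a set of functions, $F_{\{\underline{\phantom{a}}[\underline{\phantom{a}}]\}}(S)$ is exactly the collection of all choice functions whose domain is one of $D_1,\dots,D_n$.

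I would then count by partitioning according to domain. Functions with distinct domains are distinct, and $D_p\neq D_q$ for $p\neq q$, so the classes $\{\gamma\mid\dom(\gamma)=D_p\}$ are pairwise disjoint; the cardinality of the free algebra is therefore $\sum_{p=1}^{n}\bigl|\{\gamma\mid\dom(\gamma)=D_p\}\bigr|$. A choice function on $D_p$ may send each $A\in D_p$ to any one of its elements, independently, so the $p$-th summand equals $\prod_{p\in A\subseteq\{1,\dots,n\}}|A|$. The multiset of cardinalities $\{\,|A|\mid p\in A\,\}$ is independent of $p$ (relabel the ground set), so all $n$ summands coincide, and the total is $n$ times the number of choice functions on the upset of a single singleton. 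Grouping the subsets through $p$ by their cardinality then expresses that number as a product of prime-power-style factors, and multiplying by $n$ gives the stated total.

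The substantive content is entirely in the first paragraph: the domain-preservation induction, together with the fact that distinct choice functions are distinct algebra elements, guarantees that the free algebra decomposes as an \emph{exact} disjoint union of the $n$ classes indexed by $D_1,\dots,D_n$, with no overcounting. After that the count is elementary, but I would verify the exponent carefully before concluding: for each size $i$ there are $\binom{n-1}{i-1}$ subsets of $\{1,\dots,n\}$ of size $i$ that contain a fixed element, so the per-singleton count is $\prod_{i=1}^{n} i^{\binom{n-1}{i-1}}$, and I would reconcile this with the closed form recorded immediately above the statement before multiplying through by $n$.
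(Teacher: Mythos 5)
Your approach is exactly the paper's: the corollary is obtained in one line from Theorem~\ref{thm:VennComplete} by observing that the free algebra is the disjoint union, over $p\in\{1,\dots,n\}$, of the sets of choice functions with domain $D_p$, and then counting one such set and multiplying by $n$. Your first paragraph (domain-preservation of update, surjectivity from Theorem~\ref{thm:VennComplete}, disjointness because the $D_p$ are distinct) supplies the details the paper leaves implicit, and it is correct.

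The point worth flagging is your exponent check, which you were right to insist on: it exposes a discrepancy with the printed statement. A choice function on $D_p=\{A\subseteq\{1,\dots,n\}\mid p\in A\}$ makes an independent choice from each $A\in D_p$, and the number of $i$-element subsets containing the fixed element $p$ is $\binom{n-1}{i-1}$, so the per-singleton count is $\prod_{1\le i\le n} i^{\binom{n-1}{i-1}}$, exactly as you compute. The quantity $\prod_{1\le i\le n} i^{\binom{n}{i}}$ appearing in the corollary (and in the sentence preceding it) is instead the number of choice functions on \emph{all} of $\wp^{+}(\{1,\dots,n\})$, not on the upset of a singleton. The two agree for $n\le 2$ but not in general: for $n=3$ the upset of $\{1\}$ is $\{\{1\},\{1,2\},\{1,3\},\{1,2,3\}\}$, giving $1\cdot 2\cdot 2\cdot 3=12$ choice functions and hence $3\cdot 12=36$ elements in the free algebra, whereas the printed formula gives $3\cdot 1^{3}\cdot 2^{3}\cdot 3^{1}=72$. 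So do not ``reconcile'' your count with the stated closed form --- your count is the correct one, and the corollary should read $n\cdot\prod_{1\le i\le n} i^{\binom{n-1}{i-1}}$; the version in the paper is an error in the exponent.
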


The power of expressibility of update terms easily extends to similar characterisations of the power of expressibility of other subsignatures of $\{\underline{\phantom{a}}[\underline{\phantom{a}}],\sqcup,@,-\}$ containing $\underline{\phantom{a}}[\underline{\phantom{a}}]$.  
For example, in the signature $\{@,\underline{\phantom{a}}[\underline{\phantom{a}}]\}$, we may use $@$ to form any desired intersection of domains, and then use update to pattern freely within these domains.  It is very easy to see that the intersection of the domains of free generators in $F_{\tau}(S)$ (as a subalgebra of $\Choice(S)$) is precisely the set of principal upsets in the ordered set $\wp^+(S)$ of nonempty subsets of $S$.   Thus, it follows from Theorem \ref{thm:VennComplete} that under the canonical representation into $\Choice(S)$, the free algebra $F_{\{@,\underline{\phantom{a}}[\underline{\phantom{a}}]\}}(S)$  is precisely the set of all choice functions in $\Choice(S)$ having a domain that is a principal upset in $\wp^+(S)$.   Similarly, $\sqcup$ can be used to take unions of domains, so that $F_{\{\sqcup, @,\underline{\phantom{a}}[\underline{\phantom{a}}]\}}(S)$ consists of all choice functions on upsets in $\wp^+(S)$, while $F_{\{\sqcup,\underline{\phantom{a}}[\underline{\phantom{a}}]\}}(S)$ consists of all choice functions on unions of domains of the form~$D_s$.

\section{Conclusions, open problems and subsequent work}
We have shown that the class $\Rep(\sqcup,\underline{\phantom{a}}[\underline{\phantom{a}}])$ of algebras that are isomorphic to systems of functions with override and update form a variety with a relatively simple finite axiomatisation.  The class $\Rep(\sqcup)$ of override alone forms a nonfinitely axiomatisable quasivariety, but is not a variety, though its equational theory has a finite axiomatisation.  

In future work we will develop a uniform approach to representability that will enable us to give complete axiomatisations for $\Rep(\tau)$ in all combinations of the other operations discussed in Section~\ref{prelim:otheroperations}, provided that restriction is expressible.  That approach does not extend to the critical case of override and update considered in the present work.  

In Section \ref{sec:update} we have given a concrete description of the free algebras for the class $\Rep(\underline{\phantom{a}}[\underline{\phantom{a}}])$ which shows that update has surprisingly strong expressive power, and perhaps some extra prominence amongst the  operations considered.  Yet many properties of update remain elusive.  The proof method described in Theorem \ref{thm:VennComplete} makes use of a kind of normal form for expressions in $\underline{\phantom{a}}[\underline{\phantom{a}}]$, and then also for subsignatures of $\{\underline{\phantom{a}}[\underline{\phantom{a}}],\sqcup,@,-\}$ containing it.  Yet there remains the lack of any clear process for simplifying terms, for understanding equivalence of terms, for determining the minimum size of equivalent terms, and nor do we have a complete axiomatisation for $\Rep(\underline{\phantom{a}}[\underline{\phantom{a}}])$.

A further line of investigation is to incorporate composition of functions into the signature wherever possible.  While all of these signatures can be expressed as term reducts of the signatures characterised by the authors in \cite{modrest} and in \cite{HJSz}, many of the combinations sit naturally with composition and remain unclassified.  In future work on signatures that include $\mathbin{@}$, we expect it to be straightforward to add composition to the signatures considered.

\end{document}